\newtheorem{thm}{Theorem}[section]
\newtheorem{lem}[thm]{Lemma}
\gdef\secref#1{Section~\ref{#1}}
\gdef\thref#1{Th.~}
\gdef\assref#1{Assumption~\ref{#1}}
\gdef\tabref#1{Table~\ref{#1}}
\gdef\figref#1{Figure~\ref{#1}}
\gdef\algref#1{Algorithm~\ref{#1}}
\gdef\appref#1{Appendix~(\ref{#1})}
\newcommand{\diff}{\mathop{}\!\mathrm{d}}
\newcommand{\tfig}{Figure}
\newcommand{\R}{\mathbb{R}}
\newcommand{\doublewidetilde}[1]{{%
  \mathpalette\double@widetilde{#1}%
}}
\newcommand{\double@widetilde}[2]{%
  \sbox\z@{$\m@th#1\widetilde{#2}$}%
  \ht\z@=.9\ht\z@
  \widetilde{\box\z@}%
}
\definecolor{kuba}{rgb}{0.858, 0.188, 0.478}
\definecolor{minor}{rgb}{0.0, 0.0, 1.0}
\newtheorem{theorem}{Theorem}[section]
\newtheorem{lemma}[theorem]{Lemma}
\theoremstyle{definition}
\newtheorem{assumption}[theorem]{Assumption}
\newtheorem{remark}[theorem]{Remark}
\gdef\secref#1{Section~\ref{#1}}
\gdef\appref#1{Appendix~\ref{#1}}
\gdef\subsecref#1{Subsection~\ref{#1}}
\gdef\thref#1{Th.~}
\gdef\assref#1{Assumption~\ref{#1}}
\gdef\tabref#1{Table~\ref{#1}}
\gdef\figref#1{Figure~\ref{#1}}
\gdef\algref#1{Algorithm~\ref{#1}}
\author{Zuzanna~Szyma\'nska}
\address{{\it Zuzanna Szyma\'nska:} ICM, University of Warsaw, ul. Tyniecka 15/17, 02-630 Warsaw, Poland, and Institute of Mathematics, Polish Academy of Sciences, ul. \'Sniadeckich 8, 00-656 Warsaw, Poland}
\email{mysz@icm.edu.pl}
\author{Jakub Skrzeczkowski}
\address{{\it Jakub Skrzeczkowski:} Faculty of Mathematics, Informatics and Mechanics, University of Warsaw, ul. Banacha 2, 02-097 Warsaw, Poland}
\email{jakub.skrzeczkowski@student.uw.edu.pl}
\author{B\l{}a\.zej Miasojedow}
\address{{\it B\l{}a\.zej Miasojedow:} Faculty of Mathematics, Informatics and Mechanics, University of Warsaw, ul. Banacha 2, 02-097 Warsaw, Poland}
\email{bmiasojedow@mimuw.edu.pl}
\author{Piotr Gwiazda}
\address{{\it Piotr Gwiazda:} Institute of Mathematics, Polish Academy of Sciences, ul. \'Sniadeckich 8, 00-656 Warsaw, Poland}
\email{pgwiazda@mimuw.edu.pl}
\begin{document}

\title[Bayesian inference of a non-local proliferation model]{Bayesian inference of a non-local proliferation model}

\begin{abstract}
From a systems biology perspective the majority of cancer models, although interesting and providing a qualitative explanation of some problems, have a major disadvantage in that they usually miss a genuine connection with experimental data. Having this in mind, in this paper, we aim at contributing to the improvement of many cancer models which contain a proliferation term. To this end, we propose a new non-local model of cell proliferation. We select data that are suitable to perform Bayesian inference for unknown parameters and we provide a discussion on the range of applicability of the model. Furthermore, we provide proof of the stability of posterior distributions in total variation norm which exploits the theory of spaces of measures equipped with the weighted flat norm. In a companion paper, we provide detailed proof of the well-posedness of the problem and we investigate the convergence of the EBT algorithm applied to solve the equation.
\end{abstract}

\keywords{particle method, ETB method, Bayesian inverse problems, non-local equation, cancer model, proliferation function, stability of posterior distribution, parameter estimation}

\maketitle


\section{Introduction}\label{Sec:Introduction}


\noindent Mathematical models of complex biological phenomena that are developed nowadays are based on the knowledge of biophysical processes. Thanks to this, we correctly obtain the general structure of equations, but unfortunately, we miss the information on parameter values. This is obviously mainly due to the difficulty of performing specific experimental measurements. It is important to underline that sometimes there are no established experimental scenarios with which one can measure the desired parameter values to say nothing about the errors related to measurements when possible. The lack of reliable values of the parameters implies a significant limitation of the applicability of those models. To overcome this difficulty the approach taken so far was, to search for the model parameters in the literature. However, these were usually deficient or obtained within particular experimental regimes, very often not corresponding to the scenario under consideration. Therefore it is not clear at all that their interpretation is like the parameters in the mathematical model. 

Many, if not most, cancer models contain some type of logistic function to describe cancer cell proliferation, see \cite{Benzekry2014,Atuegwu2013,Lowengrub2010,Jarrett2018,Yin2019} and references therein. Mainly due to its conceptual ease this approach appears to researchers as a tempting one. Unfortunately, this description has important drawbacks. First of all, to capture the spatial expansion of the colony, terms like diffusion or different types of taxis are added, even if they are not biologically justified. Moreover, those models are usually short of reliable values of the parameters. Within this paper, we aim at improving potential cancer models, in particular, cancer invasion models, by proposing a new description of proliferation. More precisely, we extend the classical logistic proliferation function to include a non-local integral term in the growth part. Such modification allows capturing the spatial expansion, i.e. appearance of cells in new locations, without adding artificial terms. To assess the usability of the new model we apply the inverse problem methodology, i.e. we provide a Bayesian inference for unknown parameters and we demonstrate the accuracy of estimators on experimental data on multicellular spheroids growths for three different cells lines\cite{folkman}. We determine the reliable range of applicability of the newly proposed model. Since the Bayesian approach requires a large number of simulations, we use the fact that, under some assumptions, the proposed model is radially symmetrical, and we transform it into polar coordinates. Finally, we prove the stability of the numerical scheme used to solve the model and we give conclusions. 

The structure of the paper is as follows. In \secref{Sec:Model} we introduce the new mathematical model, a non-local proliferation function of the logistic type, and compare it to a previously proposed one. Then, we present the experimental data that we found best suitable for parameter estimation of simple cellular colony growth. We conclude \secref{Sec:Model} with theoretical consideration on the range of the model applicability. In \secref{Sec:ObservationModel}, we briefly describe the adopted Bayesian inference methodology and we present the observation model. In \secref{Sec:Stability} we prove stability of posterior distributions. Then in \secref{SimulationsResults}, we show the results of parameter estimation, and finally, in \secref{sec:conclusions} we discuss the obtained results and direction of future research. For the interested reader, we include \appref{ssect:cont_inv_func} and \appref{ssect:measure_theory}, one containing an auxiliary lemma on the Lipschitz continuity of the inverse function, and the second one containing results on the convergence of measure solutions of the considered problem (we also refer to and our companion paper for more analytical details \cite{gwiazda2021}). \appref{algorytm} contains the random walk Metropolis-Hastings algorithm.


\section{Non-local Proliferation Model and Experimental Data} \label{Sec:Model}


\noindent A proper description of cell colony development is a very difficult task. This is mainly because many factors influence those dynamics. Even when considering the increase in cell number alone, in addition to cell division an important role plays access to nutrients and population density. The latter particularly affects the dynamics of larger cell colonies, being significantly less important in the development of colonies at the initial stage. Moreover, the overlap of so many phenomena causes the setting up of an experimental regime suitable for obtaining data for parameter estimation of proliferation function very challenging.

An approach that researchers use frequently to picture the increase in cell number is the logistic function, which is sometimes modified for example by adding volume filling term. However, as already mentioned, the simple local logistic function applied to describe the proliferation of cells within a colony is unable to capture its spatial expansion. This problem is often bypassed by the inclusion of artificial terms, primarily diffusion, and different types of taxis, making the description more phenomenological and more distant from the biological process.

Therefore, we propose a new non-local logistic function to describe the proliferation of cells living within a colony where the integral term is introduced in the growth part to capture the phenomena of the emergence of daughter cells adjacent to proliferating cells, i.e.:
\begin{equation}\label{non-local_proliferation}
\partial _t n(x,t) \ =\  \alpha\, k* n(x,t) \,\Bigl( 1 \ -\  n(x,t) \Bigr),
\end{equation}
where $\alpha$ stands for proliferation rate, and $k=k(x)$ is a kernel function with compact support such that,
\begin{displaymath}
k \ast n(x,t) \ =\ \int_{\R^3} k(x-y) n(y,t)\;dy. \;\; 
\end{displaymath}
We assume that $k(x)$ is fixed in time radially symmetric kernel with profile $K$ that is $k(x)=K(|x|)$ for $x \in \R^3$. An interesting issue is the choice of a particular shape of kernel $k$. In our approach, we choose a kernel corresponding to a normalised characteristic function of a ball, i.e.: 
\begin{equation}\label{kernel}
K(|x|)=\frac{3}{4\pi}\sigma_{k}^{-3}\,\mathds{1}_{\left[0,\sigma_{k}\right]}(|x|)
\end{equation}
where $\sigma_{k}$ stands for kernel size. Note, that the inhibitory term $( 1 \ -\  n(x,t))$ remains local according to the interpretation that the emergence of a cell in a given location is limited by the density of cells in that very place. To the best of our knowledge, a non-local logistic proliferation function given by \eqref{non-local_proliferation} was not proposed before. However, another type of non-local logistic proliferation function was published previously by Maruvka \& Shnerb in 2006, who suggested including an integral term in the inhibitory part \cite{maruvka2006}. Importantly, in their approach, the colony progression in space is again induced solely by the diffusion term, that they keep in their model.

It is intuitively clear that the proposed model \eqref{non-local_proliferation} describes the dynamics of the cellular colony whose local maximal density is limited by the carrying capacity of the environment, and spatial progression is driven by the emergence of new cells in the neighbourhood of dividing mother cells. However, to reliably assess the model and possibly determine its range of applicability we need to refer to real data. Undoubtedly, the best data are those obtained within an experimental regime limiting the influence of phenomena other than proliferation itself to the biggest possible extent. 

Despite being quite old, from that perspective probably the best data are the classical data published by Folkman \& Hochberg in 1973 showing the evolution of multicellular spheroids in laboratory conditions with the medium being replenished and open space made available \cite{folkman}. The experiments were carried out for three different cell lines, i.e L-5178Y murine leukaemia cells, V-79 Chinese hamster lung, and B-16 mouse melanoma. Even though the experiments were carried out for three different cell lines the cultivated spheroids experienced the same general growth pattern. \figref{fig:1} shows the dynamics of mean diameter and standard deviation of measured for cultivated spheroids of L-5178Y, V-79, and B-16 cell lines, redrawn from the original paper. At first, the spheroids enlarged exponentially for a few days before the onset of central necrosis and then, for several weeks, continued on linear growth beginning with the appearance of necrosis \cite{folkman}. After reaching a critical diameter the spheroids experience no further expansion \cite{folkman}. Although the general growth pattern was the same, the precise age of switching between exponential and linear growths and stabilisation differed for cell lines under investigation. Finally, we would like to mention that in previous literature other interesting approaches are describing the dynamics of multicellular spheroids, for instance, proposed by Byrne and Chaplain \cite{byrne1995,byrne1996,byrne1998} and later analysed in many analytical papers \cite{cui2003,chen2005,friedman2006a,friedman2006b}.

We now turn to a short theoretical discussion on the possible range of applicability of the model \eqref{non-local_proliferation}. According to Folkman \& Hochberg, the exponential growth of spheroids lasts for several initial days only \cite{folkman}. Then the increase in colony volume is (approximately) proportional to its volume and this is because initially, all cells divide regardless of where they are located in the spheroid. However, that period in colony development continues as long as the cell number is relatively small and therefore is beyond the scope of applicability of any density model -- for that initial period of colony growth, the discrete description is more adequate. Since the appearance of the central necrosis, the proliferation within the spheroid is limited to the outer layer of several rows of cells. That means that the colony growth becomes (again approximately) proportional to its surface area. Assuming the range of the kernel the $k(x)$ to be significantly smaller than the radius of the spheroid, then such a scenario is suitable to describe with \eqref{non-local_proliferation}. Finally, to explain the observed cease of growth of spheroids we theorise that, over time, that their dynamics become influenced by processes that are not present at the begging of cultivation, which go beyond the phenomenon of proliferation. This may be, for example, the lysis of the necrotic part of the spheroid or an inhibitory effect resulting from the appearance of catabolites or necrotic debris.

Taking the above into account, we formulate a hypothesis that the model, given by the \eqref{non-local_proliferation} is suitable to describe the evolution of cell colonies in which the growth is limited to the outer layer of cells. To bolster the hypothesis we confront the model with experimental data presented on \figref{fig:1} and, for the period of quasi-linear growth of the tumour diameter, we perform parameter estimation using the Bayesian inference approach. Although it seems a natural choice, the kernel given by \ref{kernel} causes some analytical and practical problems. To solve \eqref{non-local_proliferation} equipped with compactly supported and Lipschitz continuous kernel one could apply an approach based on the numerical scheme called Escalator Boxcar Train (ETB) developed by de Roos \cite{deRoos1988}. However, the low regularity of the kernel which is not Lipschitz continuous prevents using standard arguments to prove the convergence of the algorithm. Therefore, we use the fact that both initial data and kernel $k(x)$ are spherically symmetrical, and we rewrite \eqref{non-local_proliferation} using spherical coordinates that gives us \eqref{non-local_proliferation_radial}. We refer the reader interested in the detailed arithmetic of the change of variables to the appropriate section in our companion paper \cite{gwiazda2021}.

Let $n(x,t)$ be the solution to \eqref{non-local_proliferation} with radially symmetric initial condition $n_0(x)$. Then the radial density $p(R,t)$ defined as
$$p(R,t) = 4\pi R^2\, n((0,0,R),t)$$
with $p_0(R) = 4\pi R^2\, n_0((0,0,R))$ satisfies
\begin{equation}\label{non-local_proliferation_radial}
\partial _t p(R,t) \  =\  \left( 4\pi R^2 \ -\  p(R,t) \right) \,  \int_0^\infty L(R,r) p(r,t)  \diff r,
\end{equation}
where the interaction kernel $L(R,r)$ is given by
\begin{equation}\label{eq:kernel_L_formula}
L(R,r) =\frac{3\alpha }{16\,\pi\,\sigma_{k}^{3}} \, \frac{\min\{(R+r)^2,\sigma_{k}^2\}-\min\{(R-r)^2,\sigma_{k}^2\}}{R \, r}.
\end{equation}
The equation \eqref{non-local_proliferation_radial} has Lipschitz kernel with only one singularity at zero. Using appropriate weighted norms we have proven that the numerical algorithm based on ETB approach converges in the setting of Radon measures. Since the full proof of this fact goes beyond the scope of this paper again we refer to \appref{ssect:measure_theory} and our analytical paper for more details \cite{gwiazda2021}. Here, let us only remark that the EBT, or more general a particle methods, for \eqref{non-local_proliferation_radial} boils down to the following. As the solutions to \eqref{non-local_proliferation_radial} are supported for all $R \geq 0$ even if one starts with compactly supported initial conditions, we introduce $R_0>0$ such that $p(R,t)$ is negligibly small for $R\geq R_0$, see Theorems \ref{thm:well-posednesS} and \ref{thm:conv_particle_method} for the precise statement, and we approximate the distribution 
\begin{equation}\label{dirac_sum}
p(R,t) \approx \sum_{i=1}^N m_i(t) \, \delta_{x_i},
\end{equation}
where $x_i = \frac{i}{N} R_0$ and $i = 1, ..., N$. With these assumptions in place, it is now sufficient to solve the system of ODEs for masses $m_i(t)$:
\begin{equation}\label{eq:num_scheme}
\partial _t m_i(t) \  =\  \left( 4\pi x_i^2 \, \frac{R_0}{N} -\  p(x_i,t) \right) \,  \sum_{j=1}^N L(x_i,x_j) m_j(t),
\end{equation}
where $m_i(0)$ are chosen so that $\sum_{i=1}^N m_i(0) \, \delta_{x_i}$ approximates the initial distribution, i.e.
\begin{equation}\label{eq:approx_init_cond}
m_i(0) = \int_{x_{i-1}}^{x_i} p(r,0) \diff r.
\end{equation}
Techniques used to prove the convergence of ETB-based numerical scheme for \eqref{non-local_proliferation_radial} involve the notion of the flat norm on the spaces of measures which provides explicit convergence error of approximation cf. Theorem \ref{thm:conv_particle_method}, and therefore is suitable for studying the order of convergence. This has direct application to our problem as posterior distributions in our work are computed based on the numerical solutions rather than explicit ones. In general, this may result in errors but thanks to estimates on errors of numerical approximation, we are able to prove the stability of posterior distributions. An additional benefit of the change of variables is the improvement in computational accuracy as well as the speed up of the numerical simulations, which is particularly important considering that Bayesian inference usually requires thousands of iterations of solutions to the estimated model. We conclude the section with the remark that techniques based on the flat norms on spaces of measures became recently a promising tool for optimal control problems \cite{MR4027078,MR4045015,MR4066016} which may result in the future in combining Bayesian techniques with optimal control. 

\begin{figure}
\begin{center}
  \begin{subfigure}{0.95\textwidth}  
  \begin{center}
    \includegraphics[width=0.72\linewidth]{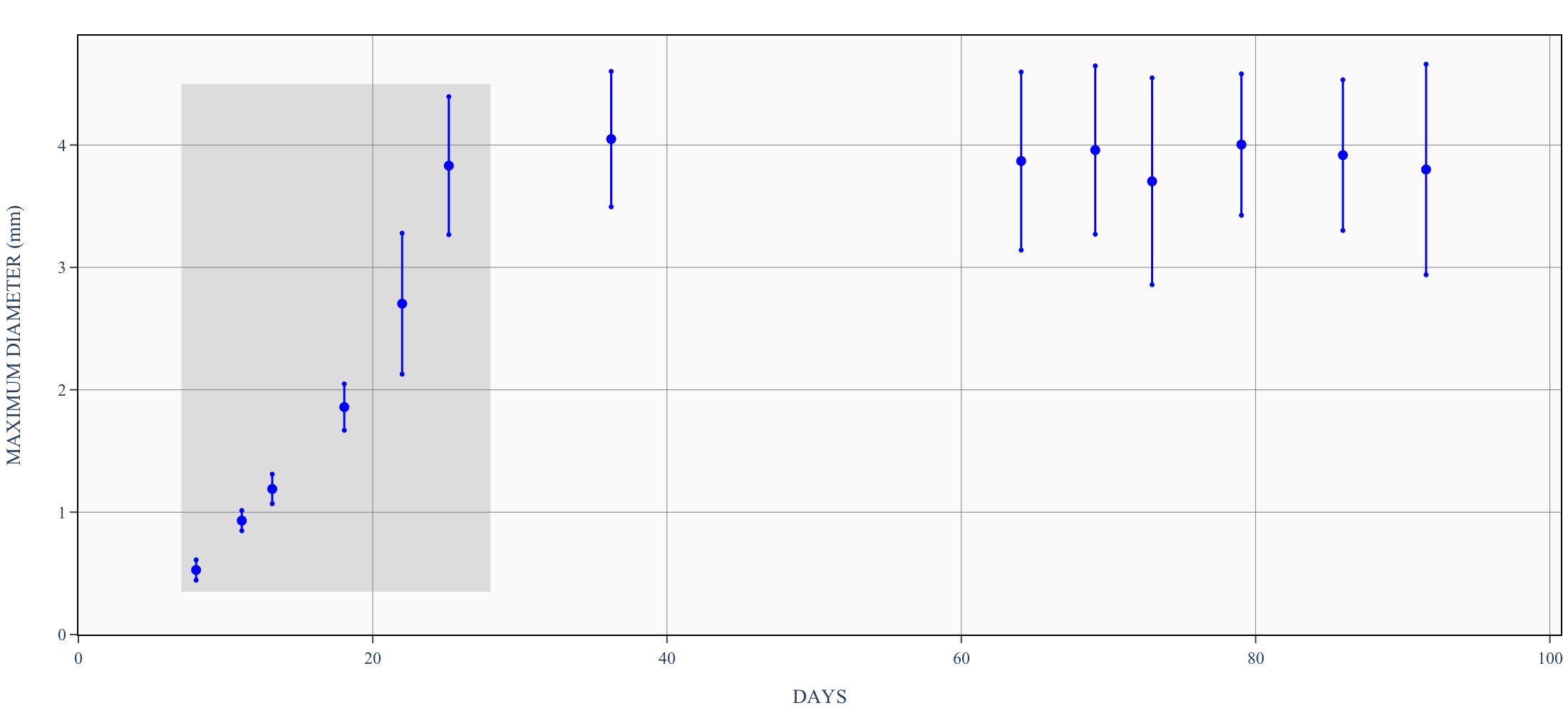}
    \caption{\footnotesize{Mean diameter and standard deviation of 50 isolated spheroids of L-5178Y cells.}} \label{fig:1a}
  \end{center}
  \end{subfigure}
  \begin{subfigure}{1\textwidth}
  \begin{center}
    \includegraphics[width=0.72\linewidth]{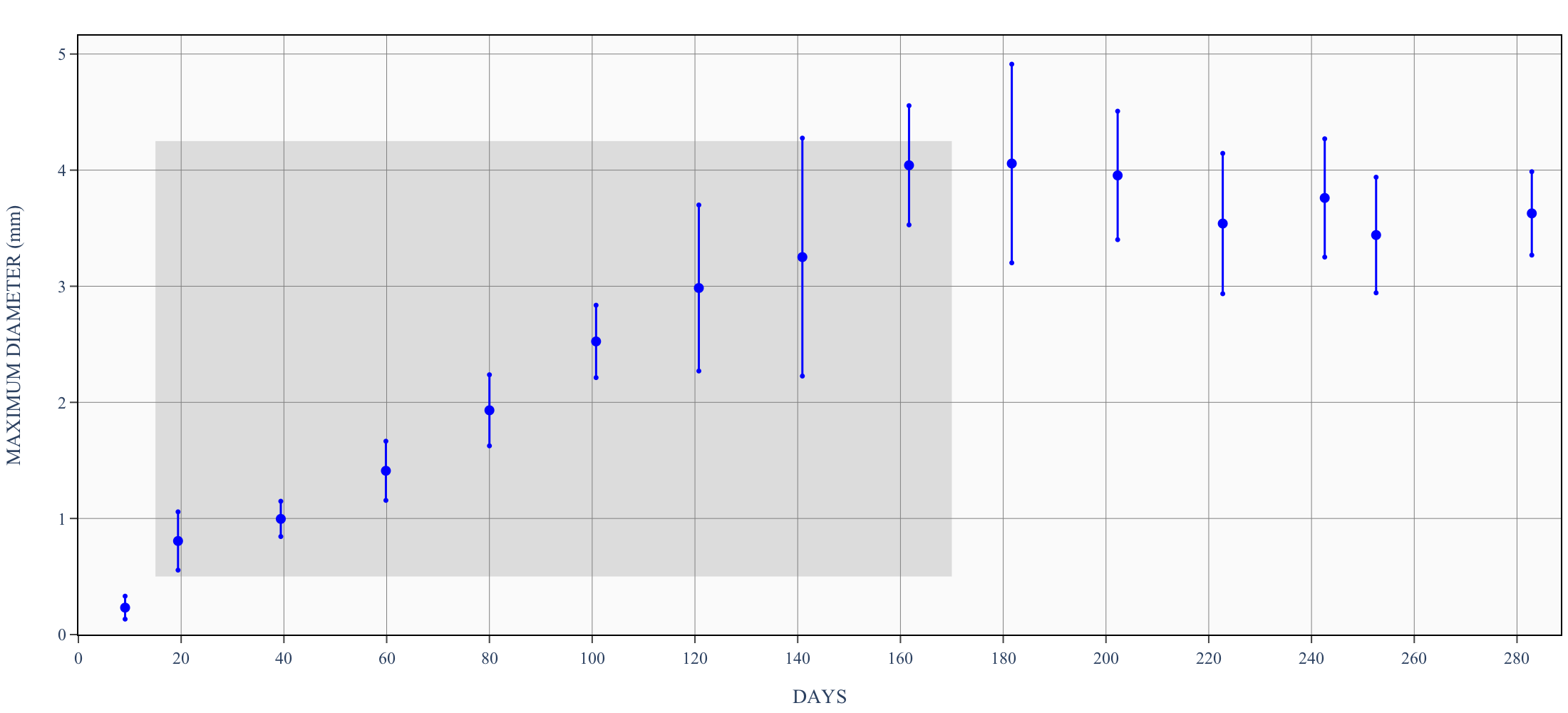}
    \caption{\footnotesize{Mean diameter and standard deviation of 70 isolated spheroids of V-79 cells.}} \label{fig:1b}      
  \end{center}
  \end{subfigure}
  \begin{subfigure}{1\textwidth}
  \begin{center}
    \includegraphics[width=0.72\linewidth]{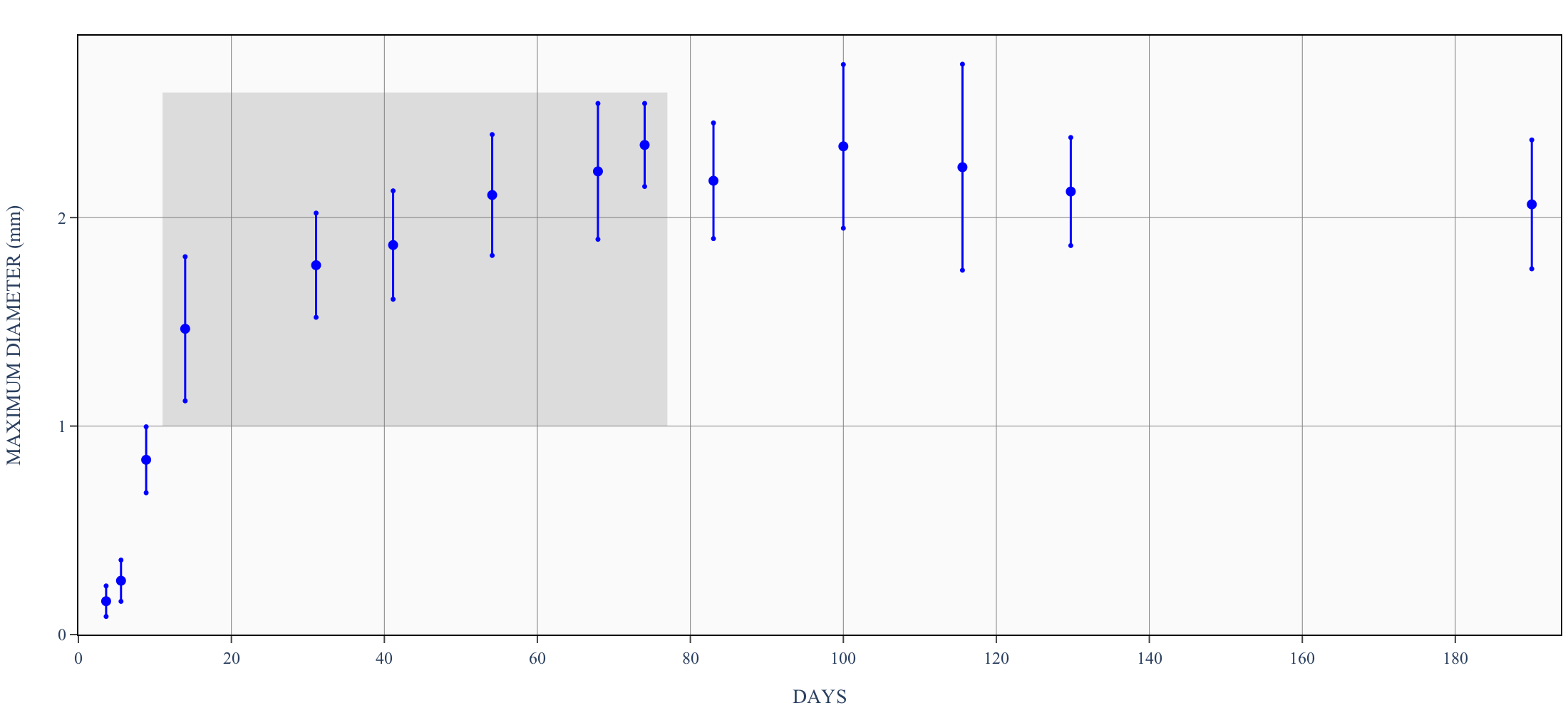}
    \caption{\footnotesize{Mean diameter and standard deviation of 32 isolated spheroids of B-16 
    cells.}} \label{fig:1c}
  \end{center}
  \end{subfigure}
\vspace{-0.5cm}
\caption{
Evolution of multicellular spheroids diameters of three cell lines grown {\it in vitro} in the unlimited fresh medium and space. Spheroidal growth appears in three stages: i) a brief phase of exponential growth before the onset of central necrosis; ii) a linear growth begging with the appearance of necrotic cells in the centre of spheroids; and iii) a dormant phase that begins when spheroid ceases to expand further \cite{folkman}. Gray rectangles indicate the time span of linear expansion. Except for the gray rectangles, data are rewritten from Folkman \& Hochberg \cite{folkman}. 
} \label{fig:1}
\end{center}
\end{figure}


\section{Observation Model}\label{Sec:ObservationModel}


\noindent To estimate parameters of the model \eqref{non-local_proliferation} we propose to use Bayesian inference. Within this approach, unknown parameters are treated as random variables that can be described with probability distributions. Bayesian inference is based on posterior distribution and the conditional distribution of parameters given the observed data. The posterior distribution, by Bayes  theorem, is given by
\begin{equation}\label{posterior}
\pi(\theta|D)=\frac{\pi(\theta)\ell(D|\theta)}{\int_{\Theta}\pi(\theta)\ell(D|\theta)d\theta}\;,    
\end{equation}
where $D$ denotes collected data and $\theta$ denotes a given vector of parameters, whereas $\Theta$ is the space of all parameters. To be precise, $\pi(\theta|D)$ is the posterior probability density that is the probability density of $\theta$ given data $D$, $\pi(\theta)$ is the prior probability density, that is the probability density of $\theta$ without any knowledge on data, finally, $\ell(D|\theta)$ is the likelihood function that quantifies the probability of observing data $D$, given the parameter $\theta$.

Usually, it is not possible to obtain an analytical formula of the joint posterior distribution $\pi(\theta|D)$ given by \eqref{posterior}. A possible way to overcome this difficulty is to use numerical methods, of which the very popular are the Markov chain Monte Carlo (MCMC) methods \cite{mccormick2014,bao2012}. MCMC methods comprise a whole class of algorithms including one of the most widely used -- the Metropolis-Hastings algorithm. The Metropolis-Hastings algorithm can be used to generate a sample from the posterior distribution $\pi(\theta|D)$, which in turn can be used to estimate the mean of the posterior distribution. The main idea behind that algorithm is to simulate a Markov chain whose stationary distribution is $\pi(\theta|D)$. This means that for a sufficiently large number of steps, samples from the Markov chain look like the samples form $\pi(\theta|D)$.

The first state $\theta_0$ of the Markov chain is selected according to some chosen "a priori" distribution $\pi(\theta)$. "A priori" distributions reflect our belief about the nature of the estimated parameters. Such a belief may be based on intuition, experience, assumptions, or even a simple guess. Then, the next step in the Metropolis-Hastings algorithm is selecting a candidate for the next state of the Markov chain taking into account the current state $\theta_j$. This requires defining a method of sampling the parameter space $\Theta$, i.e. requires defining a probability density $g(\tilde{\theta}|\theta_j)$, sometimes referred to as the proposal density or jumping distribution, that suggests a candidate $\tilde{\theta}$ for the next sample value $\theta_{j+1}$, given the previous sample value $\theta_j$. In the case of an unknown parameter being a number, the probability density $g(\tilde{\theta}|\theta_j)$ is often chosen to be a normal distribution. Whereas, when unknown parameters are a vector, the probability density $g(\tilde{\theta}|\theta_j)$ is usually a multivariate normal distribution, which makes the proposing of a candidate for a new state from a current one very simple. Having the candidate for the next state of the Markov chain we have to decide whether to accept it or not. There is no single criterion for doing that, however widely used is the function proposed by Metropolis, i.e. 
\begin{equation}\label{metropolis_function}
A=\min\Bigg\{1, \frac{\pi(\tilde{\theta}|D)g(\theta_j|\tilde{\theta})}{\pi(\theta_j|D)g(\tilde{\theta}|\theta_j)} \Bigg\}.
\end{equation}
In the case when $g(\theta_j|\tilde{\theta})=g(\tilde{\theta}|\theta_j)$, i.e. the proposal density is symmetrical, \eqref{metropolis_function} simplifies to 
\begin{equation}\label{metropolis_functionr}
A=\min\Bigg\{1, \frac{\pi(\tilde{\theta}|D)}{\pi(\theta_j|D)} \Bigg\}.
\end{equation}
Using \eqref{posterior} we obtain:
\begin{equation}\label{acceptance_pr}
A=\min\Bigg\{1, \frac{\pi(\tilde{\theta})\ell(D|\tilde{\theta})}{\pi(\theta_j)\ell(D|\theta_j)} \Bigg\}.
\end{equation}
Function $A$, often called the acceptance probability, gives the probability of the candidate $\tilde{\theta}$ being accepted as the next state in Markov chain. The Metropolis-Hastings algorithm generates a uniform random number $u\in [0,1]$ and if $u\leq A$ then sets $\theta_{j+1} := \tilde{\theta}$, otherwise sets $\theta_{j+1} := \theta_j$.

To calibrate the proposed proliferation model we need to set up the link between the data and the theoretical framework namely, we need to define the so-called observation model. To estimate the parameters we use three series of measurements of diameters of multicellular spheroids provided by Folkman \& Hochberg \cite{folkman}. Assuming that all spheroids have almost homogeneous mass, we assume that colony radius at time $t$ determine the sphere containing 95\% of the current mass of the spheroid, i.e. 
\begin{equation}\label{promien}
  r(t) = \inf \left\{ s: \int_{0}^s p(r,t)  \diff r > 0.95 \cdot \int_{0}^{\infty} p(r,t) \diff r\right\}, 
\end{equation}
where $r(t)$ denotes the colony radius at time $t$, whereas  $\int_{0}^{\infty} p(r,t) \diff r$ stands for colony mass at time $t$. Continuity of such a defined quantile function strongly depends on measure $p$. On the subset of measures having density with connected support, it can be shown that the quantile is Lipschitz continuous with respect to the underlying measure. This property is crucial for showing the stability of posterior distribution. In our approach, the solution is given by a discrete measure $\mu_t^N$, so to make the solution continuous, one may convolve it for instance with Laplace distribution
\begin{equation}\label{Laplace}
    \rho_{\epsilon} = \frac{1}{2 \epsilon} e^{-\frac{|x|}{\epsilon}}.
\end{equation}
The test comparison computations we conducted indicate that such a regularisation is not needed in practice. The differences between the simulation results are imperceptible and concern distant decimal places. On the other hand, the regularisation requires much more computing power, in particular, it extends the calculation many times, therefore we omit it while performing the parameter estimation.

Despite any efforts, the measurements of the spheroids diameters are of course burdened with an error, therefore we assume that   
\begin{equation}\label{m_obs}
r_{o}^i = r(t_i) \cdot Z_i, \;\;\; \textrm{with} \;\;\; \log (Z_i) \sim N(0,\sigma_{o}^2),
\end{equation}
or alternatively
\begin{equation}\label{m_obs_log}
  \log (r_{o}^i) = \log (r(t_i)) + \tilde{Z}_i, \;\;\; \textrm{with} \;\;\; \tilde{Z}_i \sim N(0,\sigma_{o}^2),
\end{equation}
where $r_{o}^i$ stands for colony radius at measurement performed at time $t_i$, $r(t_i)$ is the actual colony radius at time $t_i$ and $\sigma_{o}$ stands for homogeneous over time measurement error. In conclusion, we perform parameter estimation for three data series $D=\{r_{o}^i\}_{i=1}^{l}$, where $l$ stands for number of measurements taken into consideration. 

The choice of the function describing the initial distribution is to some extent arbitrary. It seems reasonable to assume that the initial function is close to the characteristic function of the ball with the phenomenological modification consisting of mollifying the edge to capture the fact that the cell density on the colony surface is smaller than inside. We assume that it is given by 
\begin{equation}\label{initial_condition}
p(r,0) = 4 \pi r^2 \Big( 1 - \Big(\frac{r}{\tilde{\sigma_i}}\Big)^{q} \Big)\mathds{1}_{\left[0,\tilde{\sigma_{i}}\right]}(r)
\end{equation}
where $\tilde{\sigma_i}$ and $q$ are chosen so that the radii of the initial colony calculated according to the formula \eqref{promien} are close to $\sigma_i$, see captions to \figref{fig:2a}, \figref{fig:2b}, and \figref{fig:2c} for precise values.

In result we have a vector $\theta = [\alpha, \sigma_{k}^2, \sigma_{o}^2, \sigma_{i}^2]$ of unknown parameters whose coordinates correspond to proliferation rate, kernel size, measurement error, and initial colony radius, respectively. For convenience, and to avoid unnecessary constraints, that are $\alpha, \;\sigma_{k}, \; \sigma_{o}, \; \sigma_{i} > 0$, we use logarithms of parameters instead of parameters itself.  Finally, we need to define the likelihood function $\ell(D|\theta)$ whose form follows directly from the assumption \eqref{m_obs} and is given by:
\begin{equation}\label{likelihood}
    \ell(D|\theta) = \prod_i  \frac{1}{\sqrt{2 \pi}\sigma_{o}} \exp{\left( \frac{-\Big(\log(r_{o}^i)-\log(r(t_i))\Big)^2}{2 \sigma_{o}^2}\right)},
\end{equation}
where $r(t_i)$ is the colony radius at time $t_i$ for vector of parameters $\theta$. 

To sample the parameter space $\Theta$ we choose a random walk Metropolis algorithm, i.e. the proposal distribution is a multivariate normal distribution
\begin{equation}\label{space_sample_parameter}
    \tilde{\theta} = \theta_{j} + Z, \;\;\; \textrm{where} \;\;\; Z \sim N(0,s \! \cdot \! \textrm{Id}),
\end{equation}
where $s$ is a step-size, tuned in a way that the acceptance probability of the candidate $\tilde{\theta}$ is close to the optimal one \cite{gelman1997}.

Finally, to complete the description of the model, we need to provide the specific ''a priori'' distribution $\pi(\theta)$. We assume the time scale of simulations corresponding to the time scale of the experiments conducted by Folkman et al. \cite{folkman}, therefore, we set the time unit to 1 day. For similar reasons, we take 1 mm as the length unit. We assume that ''a priori'' distributions for all unknown parameters are independent log-normal distributions. When determining the ''a priori'' distributions of proliferation parameters for particular cell lines, we use the mean doubling time for each cell type, see \tabref{apriori} and we assume that cells in a colony proliferate 2.4 times slower. Within this framework, we get the ''a priori'' proliferation rates equal to $1.4$, $1.04$, $0.9$, for L-5178Y, V-79, and B-16 cell lines, respectively. According to Folkman \& Hochberg \cite{folkman}, the proliferating ring is restricted to the outer layer of several cells. Therefore we assume that kernel size is equal to 6 times the cell diameter, which gives us mean values of kernel size equal to $0.06$ for L-5178Y and V-79 and $0.09$ for B-16 cell lines. We assume that the mean value of ''a priori'' distributions of initial colony radius is equal to the mean radius of the first measurements, moreover, the false and positive errors are equally probable therefore we set the mean values of measurement errors equal to zero. For $\alpha$, $\sigma_k$ and $\sigma_i$ we set the standard deviation to $1$, whereas for $\sigma_o$ we set it equal to $5$ to cover the rather higher uncertainty of observation error over other parameters. 
\begin{table}[h]
\begin{tabular}{|r|c|c|c|}
\hline
& L-5178Y & V-79 & B-16
\\\hline \hline
Doubling time & $\sim$ 11.3 $h$ \cite{Suzuki1981} &$\sim$16 $h$ \cite{V-79_1,V-79_2} & $\sim$18 $h$ \cite{Danciu2013}
\\\hline
Cell diameter &10-20$\mu m$ \cite{rozenberg2011microscopic}&10$\mu m$ \cite{Bishayee1999}& 15.4 $\mu m$ \cite{Nakamura2019}
\\\hline
Initial colony radius & 0.264 $mm$ \cite{folkman} & 0.403 $mm$ \cite{folkman}
& 0.733 $mm$ \cite{folkman}
\\\hline
\end{tabular}
\caption{Division times, mean radius of the cell, and radius of the initial colonies for three considered cell lines.}\label{apriori}
\end{table}


\section{Stability of posterior distribution}\label{Sec:Stability}


\noindent Bearing in mind that we approximate the posterior distribution \eqref{posterior} using numerical solutions rather than exact ones, we need to prove that our approximation is indeed close to the actual one. At first, in \subsecref{subsect:regularization} we propose a regularisation of the quantile function used for colony radius approximation \eqref{promien} and prove its Lipschitz continuity. We refer the interested reader to the Appendix for the detailed formulation of an auxiliary lemma, which is needed to show that the inverse of the cumulative distribution function satisfies the continuity estimate. Within the Appendix, we also recall the necessary notions from measure theory and we formulate theorems about the existence and uniqueness of measure solutions. In \subsecref{subsect:stability_proof} we prove the stability of posterior probability distribution of $\theta$ with respect to the EBT approximation of \eqref{non-local_proliferation_radial}.


\subsection{Regularisation of percentile function and its Lipschitz continuity}\label{subsect:regularization}


\noindent The quantile function \eqref{promien} used to determine the colony radius at time $t$ is not invertible, which is crucial for further analysis as we retrieve radius from the measure solution to obtain the likelihood function \eqref{likelihood}. Hence, we propose the following regularisation. Consider $\mu \in \mathcal{M}^+(\R^+)$ and continuous function $\eta: \R \to \R^+$. We define 
\begin{equation}\label{new_quantile}
\mathcal{F}^{\eta}(x,\mu) = \frac{\int_{-\infty}^{x} \mu \ast \eta(y) \diff y }{\|\mu\|_{TV}}, 
\end{equation}
where $\mu \ast \eta$ is a convolution of the measure $\mu$ with the function $\eta$, which we call a regularising kernel, and $\| \mu \|_{TV}$ is the total variation norm defined by \eqref{tv_norm}. We note that such a convolution is a function as well. We impose the additional assumptions on $\eta$ and initial condition $\mu_0$ that guarantee stability properties of \eqref{new_quantile}. 
\begin{assumption}[On initial condition $\mu_0$ and regularising kernel $\eta$]\label{ass:separate_from_1_and_0} We assume that:
\begin{itemize}
\item[(A)] There exist $\kappa < 0.05$ and $\varepsilon(\kappa) >0$ such that
\begin{equation}\label{ass:initial_data_bound_from_below}
\mu_0 \ast \eta(y) > \varepsilon(\kappa) > 0
\end{equation}
for all $y$ such that $0 < \kappa < \mathcal{F}^{\eta}(y,\mu_0) < 1 - \kappa$.
\item[(B)] $\frac{1}{\varepsilon(\mu_0)} \geq \|\mu_0\|_{TV} \geq \varepsilon(\mu_0) > 0$ for some $\varepsilon(\mu_0)$.
\item[(C)] $\eta$ is a non-negative smooth function such that $\int_{\R^+} \eta(y) \diff y = 1$ and $\eta$ is bounded, i.e. $\eta \in L^{\infty}$. Moreover, for some small $\varepsilon(\eta)$ we have $\eta(y) = 0$ for $|y|\geq \varepsilon(\eta)$ and $\eta(y) = 1$ for $|y|\leq \varepsilon(\eta)/2$.
\end{itemize}
\end{assumption}
\begin{remark}
Assumption $\eta(y) = 0$ for $|y|\geq \varepsilon(\eta)$ is purely technical and can be relaxed to the sufficiently fast decaying distributions like normal distribution or Laplace distribution, which is used in this paper, cf. \eqref{Laplace}. 
\end{remark}
\begin{remark}The \assref{ass:separate_from_1_and_0} is usually satisfied. In particular, (A) is natural and is preserved for measures being approximated by particles. \\

\noindent Let $\mu_0 \in \mathcal{M}^+(\R^+)$ be a measure with density $p(r)$ which has a connected support in $\R^+$. Then, measure $\mu_0 \ast \eta$ has slightly larger connected support of the form $(-\delta,P)$ and the map $y \mapsto \mathcal{F}^{\eta}(y, \mu_0)$ is strictly increasing. Moreover, condition $\kappa < \mathcal{F}^{\eta}(y,\mu_0) < 1-\kappa$ is satisfied if and only if $y \in (-\widetilde{\delta}, \widetilde{P}) \subset (-\delta, P)$. Hence, to fulfil \eqref{ass:initial_data_bound_from_below} we may choose 
$$
\varepsilon(\kappa):= \inf_{y \in \left(-\widetilde{\delta}, \widetilde{P}\right)} \mu \ast \eta(x)
$$
which is strictly positive by connectness of the support.

Now, we prove that \eqref{ass:initial_data_bound_from_below} is preserved under particle approximation of initial condition with uniform constants assuming that the discretisation is sufficiently small. Consider measure on $[0,R_0]$ defined with
\begin{equation}\label{eq:particle_approx}
\mu^N_{0} = \sum_{i=1}^N \delta_{x^N_i} \, \int_{x^N_{i-1}}^{x^N_i} p(r) \diff r, \qquad x^N_i = \frac{i}{N} \, R_0 , \qquad i = 1,...,N.
\end{equation}
Then, if $\eta(y) \neq 0$ for $|y|\leq \varepsilon(\eta)$ and $\eta(y) = 1$ for $|y|\leq \varepsilon(\eta)/2$, we have
\begin{multline*}
\mu^N_{0} \ast \eta(y) = \sum_{i:\, |x_i^N - y| \leq \varepsilon(\eta)} \eta(y - x_i^N) \, \int_{x^N_{i-1}}^{x^N_i} p(r) \diff r \geq \sum_{i:\, |x_i^N - y| \leq \varepsilon(\eta)/2} \eta(y - x_i^N) \, \int_{x^N_{i-1}}^{x^N_i} p(r) \diff r \\
= \sum_{i:\, |x_i^N - y| \leq \varepsilon(\eta)/2} \int_{x^N_{i-1}}^{x^N_i} p(r) \diff r \geq \int_{y - \varepsilon(\eta)/2 - R_0/N}^{y + \varepsilon(\eta)/2 +  R_0/N} p(r) \diff r \geq \int_{y - \varepsilon(\eta)/4}^{y + \varepsilon(\eta)/4 } p(r) \diff r,
\end{multline*}
where in the last line we assumed additionally that $R_0/N \leq \frac{\varepsilon(\eta)}{4}$, i.e. discretisation is sufficiently small. Now, it is enough to apply reasoning from the first part of the remark to the measure 
$$y \mapsto \int_{y - \varepsilon(\eta)/4}^{y + \varepsilon(\eta)/4 } p(r) \diff r.$$
\end{remark}
\bigskip

\noindent We choose $\kappa < 0.05$ so that the function $\mathcal{F}^{\eta}(x,\mu)$ is invertible around 0.95 which corresponds to our quantile function, see \eqref{promien}. Moreover, since $\int_{\R^+} \eta(y) \diff y = 1$ we have
$$
\| \mu \|_{TV} = \| \mu \ast \eta \|_{TV}
$$
and consequently $\mathcal{F}^{\eta}(x,\mu) \in [0,1]$.\\

\noindent Now, we prove that property \eqref{ass:initial_data_bound_from_below} propagates with time, up to an exponential constant. 
\begin{lem}\label{lem:control_from_below}
Let $\mu_t \in \mathcal{M}^+(\R^+)$ be a measure solution to \eqref{non-local_proliferation_radial} with initial condition $\mu_0$ and let $\mu_t^N$ be the particle approximation defined in \eqref{eq:num_scheme}--\eqref{eq:approx_init_cond}. Then, there is a constant $C$ depending continuously on parameters and the size of initial conditions $\|\mu_0\|_{TV}$ such that 
\begin{equation} \label{estimates}
\begin{split}
\mu_t \ast \eta(y) \geq \varepsilon({\kappa}) \cdot e^{-Ct} >0, \qquad  &  \|\mu_t\|_{TV} \geq \varepsilon(\mu_0) \cdot e^{-Ct} > 0,\\
\mu_t^N \ast \eta(y) \geq \varepsilon({\kappa}) \cdot e^{-Ct} >0, \qquad &  \|\mu_t^N\|_{TV} \geq \varepsilon(\mu_0) \cdot e^{-Ct} > 0.
\end{split}
\end{equation}
\end{lem}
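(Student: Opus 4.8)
The plan is to derive, for each of the four quantities appearing in \eqref{estimates}, a pointwise-in-time differential inequality of the form $\tfrac{\diff}{\diff t} f(t) \geq -C\, f(t)$ and then to close the argument with Gr\"onwall's inequality. Throughout I abbreviate the nonlocal proliferation rate by
\[
G[\mu](R) := \int_0^\infty L(R,r)\,\diff \mu(r),
\]
and I record the two elementary facts I will use repeatedly. First, the kernel $L$ in \eqref{eq:kernel_L_formula} is in fact bounded on the working domain: the apparent $1/(Rr)$ singularity at the origin is removable, since the numerator $\min\{(R+r)^2,\sigma_k^2\}-\min\{(R-r)^2,\sigma_k^2\}$ vanishes like $4Rr$ there, and it is clearly nonnegative, so $L \geq 0$ and $\|G[\mu]\|_{\infty} \leq \|L\|_{\infty}\,\|\mu\|_{TV}$. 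Second, the solution is supported in the bounded interval $[0,R_0]$, so $4\pi R^2 \leq 4\pi R_0^2$ uniformly.

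First I would establish an a priori \emph{upper} bound $\sup_{t\in[0,T]}\|\mu_t\|_{TV}\leq M$, with $M$ depending only on $\|\mu_0\|_{TV}$, the parameters, and $T$. Testing \eqref{non-local_proliferation_radial} against the constant $1$ and discarding the nonpositive death contribution $-\int_0^\infty G[\mu_t]\,\diff\mu_t \leq 0$ gives
\[
\frac{\diff}{\diff t}\|\mu_t\|_{TV} \;\leq\; \int_0^{R_0} 4\pi R^2\, G[\mu_t](R)\,\diff R \;\leq\; 4\pi R_0^3\,\|L\|_{\infty}\,\|\mu_t\|_{TV},
\]
so Gr\"onwall yields $\|\mu_t\|_{TV}\leq \|\mu_0\|_{TV}\,e^{4\pi R_0^3\|L\|_\infty t}=:M$. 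The same computation on the ODE system \eqref{eq:num_scheme}, after summing over $i$ and using $\sum_i L(x_i,x_j)\leq N\|L\|_\infty$ together with $\sum_i m_i(0)=\int_0^{R_0}p(r,0)\,\diff r\leq\|\mu_0\|_{TV}$, produces the identical bound $\|\mu_t^N\|_{TV}\leq M$ with $M$ \emph{independent of} $N$, because the factor $R_0/N$ in the birth term cancels the $N$ generated by the double sum. In particular $\|G[\mu_t]\|_\infty,\,\|G[\mu_t^N]\|_\infty\leq \|L\|_\infty M =: C$ on $[0,T]$.

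With $C$ fixed the four \emph{lower} bounds follow by running Gr\"onwall from below. For the total variation I now keep the death term and drop the nonnegative source:
\[
\frac{\diff}{\diff t}\|\mu_t\|_{TV} \;\geq\; -\int_0^\infty G[\mu_t]\,\diff\mu_t \;\geq\; -\|G[\mu_t]\|_\infty\,\|\mu_t\|_{TV} \;\geq\; -C\,\|\mu_t\|_{TV},
\]
whence $\|\mu_t\|_{TV}\geq \|\mu_0\|_{TV}\,e^{-Ct}\geq \varepsilon(\mu_0)\,e^{-Ct}$ by Assumption~(B). For the regularised density I test the measure formulation against $\eta(y-\cdot)$; since $\eta\geq 0$ the birth part is again nonnegative, and bounding the death part by $\|G[\mu_t]\|_\infty$ gives, for every fixed $y$,
\[
\frac{\diff}{\diff t}(\mu_t\ast\eta)(y) \;\geq\; -\|G[\mu_t]\|_\infty\,(\mu_t\ast\eta)(y) \;\geq\; -C\,(\mu_t\ast\eta)(y),
\]
so $(\mu_t\ast\eta)(y)\geq (\mu_0\ast\eta)(y)\,e^{-Ct}$; restricting to those $y$ singled out in Assumption~(A) yields $(\mu_t\ast\eta)(y)\geq \varepsilon(\kappa)\,e^{-Ct}$. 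The particle versions are obtained by the same two inequalities applied to \eqref{eq:num_scheme}: reading the death contribution $-p(x_i,t)\sum_j L(x_i,x_j)m_j$ as $-m_i(t)\,G[\mu_t^N](x_i)$ (the discrete analogue of the $-p\,G[p]$ structure of \eqref{non-local_proliferation_radial}), one gets the pointwise-in-$i$ bound $\tfrac{\diff}{\diff t}m_i(t)\geq -C\,m_i(t)$; summing against the nonnegative weights $\eta(y-x_i)$, and separately summing the bare inequality, reproduces the two differential inequalities above for $\mu_t^N\ast\eta$ and $\|\mu_t^N\|_{TV}$, and Gr\"onwall closes the argument.

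I expect the main obstacle to be precisely the uniform-in-$t$ and uniform-in-$N$ control of the total mass underlying $C$: the lower bounds are meaningful only if $C$ does not degenerate, which forces the upper bound $M$ to be genuinely independent of $N$. The delicate bookkeeping is verifying that the factor $R_0/N$ in the birth term of \eqref{eq:num_scheme} exactly compensates the $N$ from $\sum_{i,j}L(x_i,x_j)m_j$, so that the discrete Gr\"onwall constant coincides with the continuous one; the same reading of $p(x_i,t)$ as the node mass $m_i(t)$ is what keeps the discrete death rate proportional to $m_i$ rather than blowing up like $N/R_0$. A secondary point, which I would settle at the outset, is confirming that $L$ is genuinely bounded on $[0,R_0]^2$ in spite of the $1/(Rr)$ prefactor, since this is the only place the fine structure of the kernel \eqref{eq:kernel_L_formula} enters and it is what legitimises $\|G[\mu]\|_\infty\leq\|L\|_\infty\|\mu\|_{TV}$.
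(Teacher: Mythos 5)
Your argument is correct and follows essentially the same route as the paper: drop the nonnegative birth term, bound the death rate by $\|L\|_\infty\,\|\mu_t\|_{TV}\leq C$ using an a priori bound on the total mass, integrate the resulting differential inequality (convolving with $\eta$ first for the pointwise bounds), and treat the particle system identically via the reading $p(x_i,t)=m_i(t)$. The only difference is that you re-derive the uniform-in-$N$ upper bound on the total mass by a Gr\"onwall argument, whereas the paper simply imports it from Theorem~\ref{thm:well-posednesS} (proved in the companion paper).
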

\begin{proof}
Measure solutions to \eqref{non-local_proliferation_radial} are non-negative and uniformly bounded with respect to $\|\cdot\|_{TV}$, with a constant depending only on the initial condition, time, and parameters, see Theorem \ref{thm:well-posednesS} in Appendix. The proof of that theorem can be found in our companion paper \cite{gwiazda2021}. Using \eqref{non-local_proliferation_radial} we deduce
$$
\partial_t \mu_t(R) \geq -  \mu_t(R) \,  \int_0^\infty L(R,r) \diff \mu_t(r) \geq - \mu_t(R) \, \|L\|_{\infty} \, \| \mu_t\|_{TV} \geq -C\, \mu_t(R)
$$
understood in the sense of distributions. Taking convolution with $\eta$ we deduce
$$
\partial_t \mu_t \ast \eta (R) \geq -C\, \mu_t \ast \eta(R).
$$
which implies
$$
\partial_t \left[ \mu_t \ast \eta (R) \,e^{C\,t} \right] \geq 0.
$$
Integrating in time we conclude estimates for $\mu_t$. To establish estimates for $\mu_t^N$ we observe that \eqref{eq:num_scheme} implies distributional inequality
$$
\partial_t \mu_t^N(R) \geq -  \mu_t^N(R) \,  \int_0^\infty L(R,r) \diff \mu_t^N(r),
$$
so that the proof above applies also to $\mu_t^N$. As $\|\mu_0\|_{TV} = \|\mu_0^N\|_{TV}$, the proof is concluded.
\end{proof}

\noindent Now, we are in position to prove that on the appropriate set the function $\mathcal{F}^{\eta}$ satisfies Lemma \ref{lem:invertibility_with_parameter}. For simplicity, we denote by $\hat{\theta} = [\alpha, \sigma_{k}, \sigma_{i}]$. Note that the constants in the estimates \eqref{estimates} are independent of $\hat{\theta}$ assuming that $\alpha, \sigma_{k}, \sigma_{i}$ are in the certain range of values, that is usually bounded and separated from zero. For the forthcoming consideration, it is convenient to define two sets R and S consisting of solutions to equation \eqref{non-local_proliferation_radial} and the numerical scheme \eqref{eq:num_scheme}--\eqref{eq:approx_init_cond}, respectively. Moreover, to investigate stability properties of radial solutions we use weighted flat norm defined by \eqref{flat_wazona}.
\begin{align*}
 R= \Big\{&\mu_t \in \mathcal{M}^+(\R^+): \mu_t \mbox{ is a solution to  \eqref{non-local_proliferation_radial}, for } 0 \leq t \leq T, \mbox{ with }\hat{\theta} \in \left[\varepsilon(\hat{\theta}), \frac{1}{\varepsilon(\hat{\theta})} \right], \mbox{ and initial } \\ & \mbox{ condition } \mu_0 \mbox{ satisfying Assumption \ref{ass:separate_from_1_and_0} with the same constant } \varepsilon(\kappa) \mbox{ and } \varepsilon(\mu_0) \Big\},
\end{align*}
\begin{align*}
 S= \Big\{&\mu_t^N \in \mathcal{M}^+(\R^+): \mu_t^N \mbox{ is a solution to  \eqref{eq:num_scheme}--\eqref{eq:approx_init_cond}, for } 0 \leq t \leq T \mbox{ with }\hat{\theta} \in \left[\varepsilon(\hat{\theta}), \frac{1}{\varepsilon(\hat{\theta})} \right],  \mbox{ and initial } \\ & \mbox{ condition } \mu_0 \mbox{ satisfying Assumption \ref{ass:separate_from_1_and_0} with the same constant } \varepsilon(\kappa) \mbox{ and } \varepsilon(\mu_0) \Big\}.
\end{align*}
\begin{thm}\label{lem:percentile_meets_ass}
Let $\kappa$ be a small number from Assumption \ref{ass:separate_from_1_and_0}. Then, there are $0<a_{\kappa}<b_{\kappa}$ such that for all $t \in [0,T]$, the function 
$$
\mathcal{F}^{\eta}(x,\mu): (a_{\kappa},b_{\kappa}) \times (R\cup S) \to (\kappa, 1-\kappa)
$$
satisfies Lemma \ref{lem:invertibility_with_parameter}. Hence, we can define $\mathcal{G}_{\mu}:= x$ as the unique solution of equation $\mathcal{F}^{\eta}(x,\mu) = 0.95$ where $\mu$ is fixed. Moreover, for $R_0$ all such that $\sup_{t\in[0,T]}\int_{(R_0,\infty)} \diff \mu_t <0.05$ we have 
\begin{equation}\label{eq:percentile_stability}
\left| \mathcal{G}_{\mu_t} - \mathcal{G}_{\nu_t} \right| \leq C \, \left[  (2\,R_0 + 1) \left\| \frac{\mu_t - \nu_t}{r} \right\|_{BL^*} + e^{-R_0/2} \right]
\end{equation}
for some constant $C$ depending continuously on $\varepsilon(\mu_0)$, $\varepsilon(\hat{\theta})$, $\varepsilon(\kappa)$, $\kappa$, $a_{\kappa}$, $b_{\kappa}$. 
\end{thm}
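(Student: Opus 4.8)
The plan is to first verify the hypotheses of Lemma~\ref{lem:invertibility_with_parameter} for $\mathcal{F}^{\eta}$, and then to convert the resulting Lipschitz dependence of the implicit solution $\mathcal{G}_\mu$ into the quantitative bound \eqref{eq:percentile_stability}. For the invertibility, I would differentiate \eqref{new_quantile} to obtain $\partial_x \mathcal{F}^{\eta}(x,\mu) = \mu\ast\eta(x)/\|\mu\|_{TV}$ and show this is bounded below by a strictly positive constant $c$, uniformly for $t\in[0,T]$, $\mu\in R\cup S$ and $x$ in the relevant interval: the numerator is bounded below by $\varepsilon(\kappa)\,e^{-CT}$ by Lemma~\ref{lem:control_from_below}, while the denominator is bounded above by the uniform total-variation bound of Theorem~\ref{thm:well-posednesS}. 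Using Assumption~\ref{ass:separate_from_1_and_0}(A), the map $x\mapsto\mathcal{F}^{\eta}(x,\mu)$ is strictly increasing and surjective onto $(\kappa,1-\kappa)$; choosing $a_\kappa,b_\kappa$ accordingly, together with the continuous dependence of $\mathcal{F}^{\eta}$ on $\mu$ in the weighted flat norm, supplies exactly the hypotheses of Lemma~\ref{lem:invertibility_with_parameter}. This yields the well-defined, unique $\mathcal{G}_\mu$ solving $\mathcal{F}^{\eta}(x,\mu)=0.95$, with a $1/c$-Lipschitz inverse. Note that the tail smallness assumption $\sup_t\int_{(R_0,\infty)}\diff\mu_t<0.05$ places the $0.95$-quantile in $[0,R_0]$, so in particular $b_\kappa \le R_0+\varepsilon(\eta)$, which will matter below.

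With $x_\mu:=\mathcal{G}_{\mu_t}$ and $x_\nu:=\mathcal{G}_{\nu_t}$ both solving $\mathcal{F}^{\eta}(\cdot)=0.95$, the lower bound on $\partial_x\mathcal{F}^{\eta}$ gives
\[
|x_\mu - x_\nu| \;\le\; \frac{1}{c}\,\bigl| \mathcal{F}^{\eta}(x_\nu,\mu_t) - \mathcal{F}^{\eta}(x_\nu,\nu_t) \bigr|,
\]
so it remains to estimate the right-hand side at the fixed point $x=x_\nu\le R_0+\varepsilon(\eta)$. I would split it as
\[
\mathcal{F}^{\eta}(x,\mu_t) - \mathcal{F}^{\eta}(x,\nu_t) = \frac{\int_{-\infty}^{x} (\mu_t-\nu_t)\ast\eta\,(y)\,\diff y}{\|\mu_t\|_{TV}} + \Bigl( \tfrac{1}{\|\mu_t\|_{TV}} - \tfrac{1}{\|\nu_t\|_{TV}} \Bigr)\int_{-\infty}^{x}\nu_t\ast\eta\,(y)\,\diff y =: I + II.
\]

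For term $I$, Fubini's theorem gives $\int_{-\infty}^{x}(\mu_t-\nu_t)\ast\eta\,(y)\,\diff y = \int E(x-z)\,\diff(\mu_t-\nu_t)(z)$ with $E(s)=\int_{-\infty}^{s}\eta$. Since $\eta$ is compactly supported, $z\mapsto E(x-z)$ vanishes for $z>x+\varepsilon(\eta)$; rewriting $\diff(\mu_t-\nu_t)=r\,\diff\tfrac{\mu_t-\nu_t}{r}$, the test function $z\mapsto z\,E(x-z)$ is Lipschitz and bounded by $x+\varepsilon(\eta)=O(R_0)$, so together with the lower bound on $\|\mu_t\|_{TV}$ from Lemma~\ref{lem:control_from_below} one gets $|I|\le C\,R_0\,\|\tfrac{\mu_t-\nu_t}{r}\|_{BL^*}$. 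For term $II$, using the lower bounds on both total variations and $\int_{-\infty}^{x}\nu_t\ast\eta\le\|\nu_t\|_{TV}$, I would bound $|II|\le C\,\bigl|\,\|\mu_t\|_{TV}-\|\nu_t\|_{TV}\bigr| = C\,\bigl|\int\diff(\mu_t-\nu_t)\bigr|$, and then write the mass difference as $\int r\,\diff\tfrac{\mu_t-\nu_t}{r}$.

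The main obstacle is this last mass term: its natural test function $r$ is \emph{unbounded}, so it cannot be fed directly into the weighted flat norm. I would truncate with a cutoff $\chi$ equal to $r$ on $[0,R_0]$ and capped at $R_0$ beyond, which is $1$-Lipschitz and bounded by $R_0$, giving $\int\chi\,\diff\tfrac{\mu_t-\nu_t}{r}\le(R_0+1)\|\tfrac{\mu_t-\nu_t}{r}\|_{BL^*}$, while the remainder $\int(r-\chi)\,\diff\tfrac{\mu_t-\nu_t}{r}$ is supported on $(R_0,\infty)$ with integrand bounded by $1$, hence controlled by the tail mass $\int_{(R_0,\infty)}\diff(\mu_t+\nu_t)$. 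To close the estimate I would invoke the exponential tail decay of measure solutions (a consequence of Theorem~\ref{thm:well-posednesS} and the companion paper \cite{gwiazda2021}), which with the standing assumption yields $\int_{(R_0,\infty)}\diff(\mu_t+\nu_t)\le C\,e^{-R_0/2}$. Since both $I$ and $II$ contribute a test function of sup-norm of order $R_0$, their combined linear parts assemble into the factor $(2R_0+1)$; adding the two tail remainders produces the $e^{-R_0/2}$ term, and dividing by $c$ gives \eqref{eq:percentile_stability}, with $C$ depending continuously on $\varepsilon(\mu_0),\varepsilon(\hat\theta),\varepsilon(\kappa),\kappa,a_\kappa,b_\kappa$ as claimed.
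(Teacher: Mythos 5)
Your proposal is correct and follows essentially the same route as the paper: the same splitting of $\mathcal{F}^{\eta}(x,\mu_t)-\mathcal{F}^{\eta}(x,\nu_t)$ into the convolution term and the mass-normalisation term, the same Fubini argument on $\int_{-\infty}^{x}(\mu_t-\nu_t)\ast\eta$, the lower bounds from Lemma~\ref{lem:control_from_below}, the tail estimate \eqref{eq:tail_estimate}, and the conclusion via Lemma~\ref{lem:invertibility_with_parameter}. The only (cosmetic) difference is that the paper first bounds both terms by the unweighted flat norm $\|\mu_t-\nu_t\|_{BL^*}$ and then applies a single interpolation inequality $\|\mu_t-\nu_t\|_{BL^*}\le(2R_0+1)\left\|\frac{\mu_t-\nu_t}{r}\right\|_{BL^*}+C\,e^{-R_0/2}$ at the end, whereas you fold the $r$-weighting and the $R_0$-truncation into each term separately.
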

\begin{remark}The existence of an appropriate $R_0$ follows from Remark \ref{uwaga_o_ogonie}.\end{remark}
\begin{proof}
First, we note that $\partial_x \mathcal{F}^{\eta}(x,\mu_t) = \mu_t \ast \eta \geq e^{-CT}\, \mu_0 \ast \eta \geq \varepsilon_{\kappa}$ so that we can always find such $a_{\kappa}$ uniformly for all elements of $R$. Existence of such $b_{\kappa}$ follows from uniform tail estimate \eqref{eq:tail_estimate} in Theorems \ref{thm:well-posednesS} and \ref{thm:conv_particle_method}.\\

\noindent Concerning Lemma \ref{lem:invertibility_with_parameter}, the first condition is satisfied. For the second, we write
\begin{multline*}
\mathcal{F}^{\eta}(x,\mu_t) - \mathcal{F}^{\eta}(x,\nu_t) = \frac{\int_{-\infty}^{x} \mu_t \ast \eta(y) \diff y }{\|\mu_t\|_{TV}} - \frac{\int_{-\infty}^{x} \nu_t \ast \eta(y) \diff y }{\|\nu_t\|_{TV}} \leq \\ 
\leq \frac{\int_{-\infty}^{x} (\mu_t - \nu_t) \ast \eta(y) \diff y }{\|\mu_t\|_{TV}} + \int_{-\infty}^{x} \nu_t \ast \eta(y) \diff y \left(\frac{1}{\|\mu_t\|_{TV}} - \frac{1}{\|\nu_t\|_{TV}} \right) =: A+B.
\end{multline*}
Note that
$$
\int_{-\infty}^{x} (\mu_t - \nu_t) \ast \eta(y) \diff y = \int_{-\infty}^x \int_{\R^+} \eta(y-z) \diff(\mu_t - \nu_t)(z) \diff y =  \int_{\R^+} \int_{-\infty}^x \eta(y-z) \diff y  \diff(\mu_t - \nu_t)(z) 
$$
The function $z \mapsto \int_{-\infty}^x \eta(y - z) \diff y$ is bounded by an $L^1$ norm of $\eta$ and Lipschitz continuous as for all $z_1$, $z_2$ we have
$$
\left|\int_{-\infty}^x \eta(y - z_1) \diff y - \int_{-\infty}^x \eta(y - z_2) \diff y \right| \leq C_{Lip}(\eta)\,b_{\kappa}\, |z_1 - z_2|
$$
as $\eta$ was assumed to be Lipschitz continuous with constant $C_{Lip}(\eta)$. It follows that
$$
\int_{-\infty}^{x} (\mu_t - \nu_t) \ast \eta(y) \diff y \leq \left(\|\eta\|_{1} + C_{Lip}(\eta)\,b_{\kappa}\right)\, \| \mu_t - \nu_t \|_{BL^*}
$$
and consequently, using Lemma \ref{lem:control_from_below} we can estimate term $A$ with
$$
A \leq \frac{\left(\|\eta\|_{1} + C_{Lip}(\eta)\,b_{\kappa}\right)\,e^{CT}}{\varepsilon(\mu_0)}\, \| \mu_t - \nu_t \|_{BL^*}.
$$
For term $B$ we observe $\|\mu_t\|_{TV} = \int_{\R^+} \diff \mu_t$ and $\|\nu_t\|_{TV} = \int_{\R^+} \diff \nu_t$ so that
$$
\left(\frac{1}{\|\mu_t\|_{TV}} - \frac{1}{\|\nu_t\|_{TV}} \right) = \frac{1}{\|\mu_t\|_{TV}\,\|\nu_t\|_{TV} }\,\int_{\R^+} \diff (\mu_t - \nu_t) \leq \, \frac{e^{2CT}}{\varepsilon(\mu_0)^2} \, \| \mu_t - \nu_t\|_{BL^*}.
$$
where $C$ comes from Lemma \ref{lem:control_from_below}. By virtue of Young's convolutional inequality we observe that
$$
\left|\int_{-\infty}^{x} \nu_t \ast \eta(y) \diff y \right| \leq \left|\int_{\R^+} \nu_t \ast \eta(y) \diff y \right| \leq \|\eta\|_{1} \, \|\nu_t\|_{TV}
$$
which implies
$$
B \leq \|\eta\|_{1} \, \|\nu_t\|_{TV}\, \frac{e^{2CT}}{\varepsilon(\mu_0)^2} \, \| \mu_t - \nu_t\|_{BL^*}.
$$
Finally, we note that for all $R_0 > 0$ we have interpolation inequality
\begin{equation*}\label{eq:interpolation_inequality}
\| \mu_t - \nu_t\|_{BL^*} \leq (2\,R_0 + 1) \, \left\| \frac{\mu_t - \nu_t}{r} \right\|_{BL^*} + C \,e^{-R_0/2}.
\end{equation*}
Indeed, for all $\psi \in BL(\R^+)$ with $\|\psi\|_{BL}\leq 1$
$$
\left|\int_{\R^+} {\psi(r)} \diff (\mu_t - \nu_t)(r)\right| = \left|\int_{r \leq R_0} \frac{r \psi(r)}{r} \diff (\mu_t - \nu_t)(r)\right| +
\left|\int_{r > R_0} \frac{r \psi(r)}{r} \diff \mu_t(r)\right| + \left|\int_{r > R_0} \frac{r \psi(r)}{r} \diff \nu_t(r)\right|.
$$
For the first term we note that the map $[0,R_0] \ni r \mapsto r \psi(r)$ is bounded with $R_0$ and Lipschitz continuous with constant $(1+R_0)$. Hence,
$$
\left|\int_{r \leq R_0} \frac{r \psi(r)}{r} \diff (\mu_t - \nu_t)(r)\right| \leq (1 + 2R_0)\, \left\| \frac{\mu_t - \nu_t}{r} \right\|_{BL^*[0,R_0]}.
$$
For the second and third term we use decay estimate \eqref{eq:tail_estimate}. Indeed, $|r \, \psi(r)| \leq 2$ and $r/2 \leq e^{r/2}$ so that 
$$
\left|\int_{r > R_0} \frac{r \psi(r)}{r} \diff \mu_t(r)\right| \leq 2 \int_{r > R_0} \frac{e^{r/2}}{r} \diff \mu_t(r)
 \leq 2\, e^{-R_0/2} \int_{\R^+} e^r \diff \mu_t(r) \leq C\, e^{-R_0/2}.
$$
\noindent Taking supremum over all $\psi \in BL(\R^+)$ with $\|\psi\|_{BL}\leq 1$ we conclude the proof of \eqref{eq:interpolation_inequality} which proves
$$
\left|\mathcal{F}^{\eta}(x,\mu_t) - \mathcal{F}^{\eta}(x,\nu_t)\right| \leq C\left[(2\,R_0 + 1) \, \left\| \frac{\mu_t - \nu_t}{r} \right\|_{BL^*} + \,e^{-R_0/2} \right],
$$
where $C$ may depend on $\varepsilon(\mu_0)$, $\varepsilon(\hat{\theta})$, $\varepsilon(\kappa)$, $\kappa$, $a_{\kappa}$, $b_{\kappa}$. Now, as $\kappa < 0.05$ we obtain \eqref{eq:percentile_stability} directly from Lemma \ref{lem:invertibility_with_parameter}. 
\end{proof}


\subsection{Proof of stability of posterior distribution}\label{subsect:stability_proof}


Let us remind, that posterior distribution is given by \eqref{posterior} with likelihood function defined by \eqref{likelihood}. The actual colony radii $r(t_i)$ are the function of the solution, i.e. $r(t_i) = \mathcal{G}_{\mu_{t_i}}$. Hence, we may write 
\begin{equation}\label{eq:aposteriori_with_dep_onmu}
\ell(D|\theta)^{\mu} = \prod_{i=1}^l  \frac{1}{\sqrt{2 \pi}\sigma_{o}} \exp{\left( \frac{-\Big(\log(r_{o}^i)-\log(\mathcal{G}_{\mu_{t_i}(\hat{\theta})})\Big)^2}{2 \sigma_{o}^2}\right)},
\end{equation}
where we added a superscript $\mu$ to denote dependence on the measure solution $\mu$. Recall that as in Theorem \ref{lem:percentile_meets_ass}, we work in the set $R \cup S$ of measure solutions obtained with appropriate initial conditions and values of parameters as well as solutions to the numerical scheme.

\begin{lemma}\label{lem:separation_data}
Let $\varepsilon(D)$ be such that $\frac{1}{\varepsilon(D)} \geq r_0^i \geq \varepsilon(D)$. Then, there exists $\varepsilon(\theta, D) > 0$ such that for all $\mu_t \in R$ as in Theorem \ref{lem:percentile_meets_ass} we have 
$$
\ell(D|\theta)^{\mu} \geq \varepsilon(\theta, D), \qquad \qquad  \int_{\Theta} \ell(D|\theta)^{\mu} \pi(\theta) \geq \varepsilon(\theta, D).
$$
\end{lemma}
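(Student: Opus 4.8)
The plan is to reduce the lower bound on the likelihood to a \emph{uniform two-sided control} of the computed radius $\mathcal{G}_{\mu_{t_i}}$ over the whole family $R$, and then bound the resulting product of Gaussian factors from below by an elementary estimate. First I would invoke Theorem \ref{lem:percentile_meets_ass}: for every $\mu_t \in R$ and every measurement time $t_i \in [0,T]$, the quantity $\mathcal{G}_{\mu_{t_i}}$ is well defined as the unique solution of $\mathcal{F}^{\eta}(x,\mu_{t_i}) = 0.95$ and lies in the fixed interval $(a_{\kappa}, b_{\kappa})$ with $0 < a_{\kappa} < b_{\kappa}$, the endpoints being independent of the particular solution in $R$. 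Taking logarithms, this gives $\log \mathcal{G}_{\mu_{t_i}} \in [\log a_{\kappa}, \log b_{\kappa}]$ uniformly in $\mu_t \in R$; the strict lower bound $a_{\kappa} > 0$ is precisely what keeps $\log \mathcal{G}_{\mu_{t_i}}$ finite, while $b_{\kappa}$ keeps it bounded above.

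Next I would bring in the data: the hypothesis $\varepsilon(D) \le r_o^i \le 1/\varepsilon(D)$ gives $|\log r_o^i| \le \log(1/\varepsilon(D))$. Combined with the previous step, there is a constant $M = M(\varepsilon(D), a_{\kappa}, b_{\kappa})$, independent of $\mu_t \in R$ and of $i$, such that $\big|\log r_o^i - \log \mathcal{G}_{\mu_{t_i}}\big| \le M$. Substituting into \eqref{eq:aposteriori_with_dep_onmu}, each exponential factor is bounded below by $\exp(-M^2/(2\sigma_o^2))$, so that
$$
\ell(D|\theta)^{\mu} \ge \left(\frac{1}{\sqrt{2\pi}\,\sigma_o}\right)^{l} \exp\!\left(-\frac{l\,M^2}{2\sigma_o^2}\right) =: \varepsilon(\theta, D) > 0.
$$
Because $M$ does not depend on the chosen solution, this lower bound is uniform over all $\mu_t \in R$ (and, by the identical argument, over $S$), which is exactly the first inequality.

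For the second inequality I would integrate the pointwise bound against the prior. Since $\varepsilon(\theta, D)$ is valid for every admissible solution, in particular for whichever solution enters $\ell(D|\theta)^{\mu}$ at each $\theta$, we have $\ell(D|\theta)^{\mu} \ge \varepsilon(\theta, D)$ for a.e.\ $\theta$, whence
$$
\int_{\Theta} \ell(D|\theta)^{\mu}\, \pi(\theta)\, d\theta \ge \int_{\Theta} \varepsilon(\theta, D)\, \pi(\theta)\, d\theta > 0,
$$
the strict positivity following from $\varepsilon(\cdot, D) > 0$ together with $\int_{\Theta} \pi = 1$; this positive constant furnishes the required lower bound on the evidence.

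I expect the only genuine content to reside in the very first step: the uniform separation of the radius from both $0$ and $\infty$ across the entire family $R$. This is exactly the output of Theorem \ref{lem:percentile_meets_ass}, and once it is in hand the remainder is a routine estimate on a product of Gaussians. The point to be careful about is that the lower separation $a_{\kappa} > 0$ is indispensable, owing to the logarithm, so one must apply Theorem \ref{lem:percentile_meets_ass} simultaneously at every measurement time $t_i$ with the \emph{same} constants $a_{\kappa}, b_{\kappa}$; this is legitimate precisely because those constants were produced uniformly in $t \in [0,T]$ and over $R \cup S$.
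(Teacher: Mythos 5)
Your proposal is correct and follows essentially the same route as the paper: the uniform bounds $a_{\kappa}\leq \mathcal{G}_{\mu_{t_i}}\leq b_{\kappa}$ from Theorem~\ref{lem:percentile_meets_ass} together with the data bound control each exponent, yielding a pointwise lower bound on $\ell(D|\theta)^{\mu}$, and the second inequality then follows by integrating against the prior using $\int_{\Theta}\pi(\theta)=1$. The paper states this in one line; you have merely written out the elementary estimate explicitly.
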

\begin{remark}
The existence of $\varepsilon(D)$ such as in Lemma~\ref{lem:separation_data} is due to the nature of the data. 
\end{remark}
\begin{proof}
Note that $\log a_{\kappa} \leq \log(\mathcal{G}_{\mu_{t_i}(\hat{\theta})}) \leq \log b_{\kappa}$, so the first inequality follows directly from assumptions and formula \eqref{eq:aposteriori_with_dep_onmu}, while the second one follows from the first after noting that $\int_{\Theta} \pi(\theta) = 1$.
\end{proof}
\begin{lemma}\label{lem:lip_continuity_exp}
Let $0< a< b $, $\zeta > 0$ and $w \in \mathbb{R}$. Then, function 
$$
(a,b) \ni y \mapsto \mathcal{F}(y) := \exp{\left(-\frac{(w-\log(y))^2}{\zeta}\right)}
$$
is Lipschitz continuous with constant $2 \frac{w + \log(b)}{\zeta \, a}$.
\end{lemma}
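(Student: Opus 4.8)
The plan is to establish Lipschitz continuity by showing that $\mathcal{F}$ is continuously differentiable on $(a,b)$ and bounding its derivative by the claimed constant; the Lipschitz estimate then follows from the mean value theorem (equivalently, by integrating $\mathcal{F}'$ over the segment joining two points).

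First I would differentiate using the chain rule. Setting $u(y) := w - \log y$, so that $\mathcal{F}(y) = \exp(-u(y)^2/\zeta)$ and $u'(y) = -1/y$ (well-defined and smooth since $a > 0$ forces $y$ away from $0$), I obtain
\[
\mathcal{F}'(y) = \frac{2\,(w - \log y)}{\zeta\, y}\,\exp\!\left(-\frac{(w-\log y)^2}{\zeta}\right).
\]
The next step is to bound $|\mathcal{F}'(y)|$ factor by factor for $y \in (a,b)$. The exponential factor is at most $1$, because $\zeta > 0$ makes its argument nonpositive. The denominator contributes $1/y \le 1/a$ since $y > a$. Finally the numerator is controlled via $|w - \log y| \le w + \log b$, using the triangle inequality together with the fact that $\log y \le \log b$ on the interval. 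Multiplying these three bounds gives
\[
|\mathcal{F}'(y)| \le \frac{2\,(w + \log b)}{\zeta\, a}, \qquad y \in (a,b),
\]
and since $\mathcal{F} \in C^1(a,b)$ with derivative bounded by this constant, the mean value theorem yields $|\mathcal{F}(y_1) - \mathcal{F}(y_2)| \le \tfrac{2(w+\log b)}{\zeta a}\,|y_1 - y_2|$ for all $y_1, y_2 \in (a,b)$, which is exactly the assertion.

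The computation of $\mathcal{F}'$ and the bound on the exponential and on $1/y$ are entirely routine; the one delicate point is the estimate $|w - \log y| \le w + \log b$, where one must be careful about signs. This is a crude bound that matches the stated constant precisely only in the regime relevant to the application (where $w = \log(r_o^i)$ and $y = \mathcal{G}_{\mu}$ lie in the controlled ranges from Theorem~\ref{lem:percentile_meets_ass}), so that both $w + \log b$ and the resulting constant are positive; I would flag this as the step requiring the most attention, since for arbitrary real $w$ one would instead carry the sharper bound $\max\{|w - \log a|,\,|w - \log b|\}$ and then specialize. Everything else is immediate.
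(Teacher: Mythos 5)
Your argument is essentially the paper's own proof: the paper composes the $1$-Lipschitz map $x\mapsto e^x$ (for $x\le 0$) with the inner function and bounds $\bigl|\partial_y (w-\log y)^2/\zeta\bigr| \le 2(w+\log b)/(\zeta a)$, which is exactly your factor-by-factor derivative bound followed by the mean value theorem. Your observation that $|w-\log y|\le w+\log b$ is only a valid (and positive) bound in the regime relevant to the application is a fair caveat, and the paper's proof silently makes the same estimate.
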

\begin{proof}
Clearly, the function $x \mapsto e^x$ for $x \leq 0$ is 1-Lipschitz. Moreover
$$
\left|\partial_y \frac{(w-\log(y))^2}{\zeta}\right| = \left| -2\frac{(w-\log(y))}{\zeta} \, \frac{1}{y}\right| \leq
2 \frac{w + \log(b)}{\zeta \, a}.
$$
The conclusion follows.
\end{proof}
\begin{theorem}[Lipschitz continuity of posterior distributions]\label{thm:a_posteriori_distribution_stability}
Let $\pi_{1}$, $\pi_2$ be a posteriori distributions computed using measure solutions $\mu_t, \nu_t \in R$, i.e.
$$
\pi_1(\theta|D) = \frac{\ell(D|\theta)^{\mu} \pi(\theta)}{\int_{\Theta} \ell(D|\theta)^{\mu} \pi(\theta)}, \qquad \qquad 
\pi_2(\theta|D) = \frac{\ell(D|\theta)^{\nu} \pi(\theta)}{\int_{\Theta} \ell(D|\theta)^{\nu} \pi(\theta)}.
$$
Assume additionally that $\sigma_o \geq \varepsilon(\hat{\theta}) >0$. Then, there is a constant $C$ such that for all $R_0 > 0$,
\begin{equation}\label{eq:general_stability_equation}
\| \pi_1(\theta|D) - \pi_2(\theta|D) \|_{TV} \leq C \left[  (2R_0 + 1)\, \left\| \frac{\mu_t - \nu_t}{r} \right\|_{BL^*} + e^{-R_0/2} \right]. 
\end{equation}
\end{theorem}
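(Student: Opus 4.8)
The plan is to reduce the total variation distance between the two posteriors to a pointwise (in $\theta$) estimate on the difference of likelihoods, and then feed in the Lipschitz bounds already established. Writing $Z_\mu = \int_\Theta \ell(D|\theta)^\mu\, \pi(\theta)\diff\theta$ and similarly $Z_\nu$, I would first expand
$$
\pi_1 - \pi_2 = \frac{(\ell^\mu - \ell^\nu)\,\pi}{Z_\mu} + \ell^\nu \pi \left( \frac{1}{Z_\mu} - \frac{1}{Z_\nu}\right),
$$
integrate the absolute value over $\Theta$, and use $\int_\Theta \ell^\nu \pi = Z_\nu$ together with $|Z_\mu - Z_\nu| \leq \int_\Theta |\ell^\mu - \ell^\nu|\,\pi$ to collapse everything into
$$
\| \pi_1 - \pi_2 \|_{TV} \leq \frac{2}{Z_\mu} \int_\Theta |\ell(D|\theta)^\mu - \ell(D|\theta)^\nu|\, \pi(\theta)\diff\theta.
$$
The denominator $Z_\mu$ is bounded below uniformly by $\varepsilon(\theta,D) > 0$ thanks to Lemma~\ref{lem:separation_data}, so the whole problem reduces to controlling the integrated likelihood difference.

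Next I would estimate $|\ell^\mu - \ell^\nu|$ pointwise in $\theta$. Since each likelihood is a product over the $l$ measurements of factors $g_i^\mu := \exp\!\big(-(\log r_o^i - \log \mathcal{G}_{\mu_{t_i}})^2/(2\sigma_o^2)\big)$, each lying in $[0,1]$, a standard telescoping estimate for products of functions bounded by one gives
$$
|\ell^\mu - \ell^\nu| \leq \Big(\tfrac{1}{\sqrt{2\pi}\sigma_o}\Big)^l \sum_{i=1}^l \big| g_i^\mu - g_i^\nu \big|.
$$
For each factor I would apply Lemma~\ref{lem:lip_continuity_exp} with $w = \log r_o^i$, $\zeta = 2\sigma_o^2$, and $y = \mathcal{G}_{\mu_{t_i}}$, which is legitimate because Theorem~\ref{lem:percentile_meets_ass} guarantees $\mathcal{G}_{\mu_{t_i}} \in (a_\kappa, b_\kappa)$; this bounds $|g_i^\mu - g_i^\nu|$ by a uniform Lipschitz constant times $|\mathcal{G}_{\mu_{t_i}} - \mathcal{G}_{\nu_{t_i}}|$. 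The percentile stability estimate~\eqref{eq:percentile_stability} then controls the latter by $(2R_0 + 1)\,\|(\mu_{t_i} - \nu_{t_i})/r\|_{BL^*} + e^{-R_0/2}$. Chaining these three Lipschitz bounds, summing over $i$, and interpreting the norm on the right-hand side as a supremum over the measurement times (with the factor $l$ and that supremum absorbed into $C$) produces the claimed estimate.

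The point requiring genuine care is the uniformity of every constant in $\theta$ as one integrates over $\Theta$. The Lipschitz constant from Lemma~\ref{lem:lip_continuity_exp} is $2(w + \log b_\kappa)/(2\sigma_o^2 a_\kappa)$ and the telescoping prefactor $(\sqrt{2\pi}\sigma_o)^{-l}$ both degenerate as $\sigma_o \to 0$; this is exactly why the additional hypothesis $\sigma_o \geq \varepsilon(\hat\theta) > 0$ is imposed, and why we restrict to the parameter region defining $R$ and $S$, where $\hat\theta$ is bounded and separated from zero. Together with the data bound $\varepsilon(D) \leq r_o^i \leq 1/\varepsilon(D)$ and the fixed window $(a_\kappa, b_\kappa)$, the Lipschitz constant is bounded uniformly over $\theta$, so it factors out of the integral and the remaining $\int_\Theta \pi(\theta)\diff\theta = 1$ is harmless. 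The only real obstacle, then, is checking that the earlier results indeed furnish constants uniform across the whole admissible parameter range rather than for a single fixed $\theta$; once that is secured, the estimate assembles mechanically.
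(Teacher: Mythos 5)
Your proposal follows essentially the same route as the paper's proof: the same decomposition of the total variation distance into two terms controlled by $\int_{\Theta}|\ell(D|\theta)^{\mu}-\ell(D|\theta)^{\nu}|\,\pi(\theta)$ together with the lower bound from Lemma~\ref{lem:separation_data}, followed by the same chain through Lemma~\ref{lem:lip_continuity_exp} and the percentile stability estimate \eqref{eq:percentile_stability}, with the same attention to uniformity of constants over the admissible parameter range. The only (harmless) difference is your more careful prefactor $(\sqrt{2\pi}\,\sigma_o)^{-l}$ in the telescoping step, which the paper writes with a single power and absorbs into the constant.
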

\begin{proof}
First, we observe that
\begin{equation}\label{eq:inequalitypi1pi2}
\begin{split}
&\| \pi_1(\theta|D) - \pi_2(\theta|D) \|_{TV} \leq \phantom{{\int_{\Theta}}{\int_{\Theta}}} \\ & \qquad \leq \frac{\int_{\Theta} \left|\ell(D|\theta)^{\mu} - {\ell(D|\theta)^{\nu}}\right| \, \pi(\theta)}{\int_{\Theta} \ell(D|\theta)^{\mu} \pi(\theta)} + \int_{\Theta} \ell(D|\theta)^{\nu} \, \pi(\theta)  \frac{\int_{\Theta} \left|\ell(D|\theta)^{\mu} - {\ell(D|\theta)^{\nu}}\right| \, \pi(\theta)}{\int_{\Theta} \ell(D|\theta)^{\mu} \pi(\theta) \, \int_{\Theta} \ell(D|\theta)^{\nu} \pi(\theta)}   \\
& \qquad \leq \frac{\int_{\Theta} \left|\ell(D|\theta)^{\mu} - {\ell(D|\theta)^{\nu}}\right| \, \pi(\theta)}{\int_{\Theta} \ell(D|\theta)^{\mu} \pi(\theta)} + \frac{\int_{\Theta} \left|\ell(D|\theta)^{\mu} - {\ell(D|\theta)^{\nu}}\right| \, \pi(\theta)}{\int_{\Theta} \ell(D|\theta)^{\mu} \pi(\theta)}  
\end{split}
\end{equation}
We note that triangle inequality and inequality $0 \leq e^{-x} \leq 1$ for $x \leq 0$ implies 
\begin{align*}
&|\ell(D|\theta)^{\mu} - \ell(D|\theta)^{\nu}| \leq \phantom{\Bigg(} \\
&\leq \frac{1}{\sqrt{2 \pi}\sigma_{o}} \sum_{i=1}^l \left|  \exp{\left( \frac{-\Big(\log(r_{o}^i)-\log(\mathcal{G}_{\mu_{t_i}(\hat{\theta})})\Big)^2}{2 \sigma_{o}^2}\right)} 
- 
\exp{\left( \frac{-\Big(\log(r_{o}^i)-\log(\mathcal{G}_{\mu_{t_i}(\hat{\theta})})\Big)^2}{2 \sigma_{o}^2}\right)} \right|.
\end{align*}
Then from Lemma \ref{lem:lip_continuity_exp} we obtain that
$$
|\ell(D|\theta)^{\mu} - \ell(D|\theta)^{\nu}| \leq \frac{1}{\sqrt{2 \pi}\sigma_{o}} \sum_{i=1}^l 2 \frac{|\log(r_{o}^i)| + \log(b_{\kappa})}{\sigma_o \, a_{\kappa}} \left| \mathcal{G}_{\mu_{t_i}(\hat{\theta})} - \mathcal{G}_{\nu_{t_i}(\hat{\theta})}\right|
$$
where $a_{\kappa}$ and $b_{\kappa}$ are such that $0 < a_{\kappa} \leq \mathcal{G}_{\mu_{t_i}(\hat{\theta})}, \mathcal{G}_{\nu_{t_i}(\hat{\theta})} \leq b_{\kappa}$. Letting 
$$
C_1 := \frac{l}{\sqrt{2 \pi} \, \varepsilon(\theta,D)} \frac{\sup_{i=1}^l|\log(r_{o}^i)| + \log(b)}{\varepsilon(\theta,D) \, a}
$$
we obtain
$$
|\ell(D|\theta)^{\mu} - \ell(D|\theta)^{\nu}| \leq C_1 \sup_{1 \leq i \leq l} \left| \mathcal{G}_{\mu_{t_i}(\hat{\theta)}} - \mathcal{G}_{\nu_{t_i}(\hat{\theta})}\right|.
$$
Then, equation \eqref{eq:percentile_stability} implies
$$
|\ell(D|\theta)^{\mu} - \ell(D|\theta)^{\nu}|  \leq C\left[  (2R_0+1)\, \left\| \frac{\mu_t - \nu_t}{r} \right\|_{BL^*} + e^{-R_0/2} \right]
$$
for a possibly larger constant $C$. Hence, from \eqref{eq:inequalitypi1pi2} we deduce
$$
\| \pi_1(\theta|D) - \pi_2(\theta|D) \|_{TV} \leq \frac{2C}{\varepsilon(\theta,D)} \left[  (2R_0 + 1)\, \left\| \frac{\mu_t - \nu_t}{r} \right\|_{BL^*} + e^{-R_0/2} \right]
$$
where we applied Lemma \ref{lem:separation_data} and $\int_{\Theta} \pi(\theta) = 1$.
\end{proof}
\begin{theorem}[Stability of posterior distribution with respect to particle approximation]
Let $\mu_t \in R$ be a measure solution to \eqref{non-local_proliferation_radial} with initial condition. Let $\mu_t^N =  m_i(t)\,\delta_{x_i}$ where $m_i(t)$ solve system of ODEs \eqref{eq:num_scheme} and $m_i(0)$ are chosen as in \eqref{eq:approx_init_cond}. Then, if we let
$$
\pi(\theta|D) = \frac{\ell(D|\theta)^{\mu} \pi(\theta)}{\int_{\Theta} \ell(D|\theta)^{\mu} \pi(\theta)}, \qquad \qquad 
\pi_N(\theta|D) = \frac{\ell(D|\theta)^{\mu^N} \pi(\theta)}{\int_{\Theta} \ell(D|\theta)^{\mu^N} \pi(\theta)}.
$$
we have
$$
\| \pi(\theta|D) - \pi_N(\theta|D) \|_{TV} \leq C \left[  (2R_0 + 1)\, \left(\frac{R_0^2}{N} + e^{-R_0}\right) + e^{-R_0/2} \right] 
$$
In particular,
$$
\lim_{R_0 \to \infty} \lim_{N\to \infty} \| \pi(\theta|D) - \pi_N(\theta|D) \|_{TV} = 0.
$$
\end{theorem}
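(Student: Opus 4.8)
The plan is to reduce the statement to two ingredients that are already available: the Lipschitz continuity of posterior distributions from Theorem~\ref{thm:a_posteriori_distribution_stability}, and the convergence of the EBT particle scheme in the weighted flat norm, i.e.\ Theorem~\ref{thm:conv_particle_method}. First I would apply the estimate \eqref{eq:general_stability_equation} with the second solution $\nu_t$ replaced by the particle approximation $\mu_t^N$. Although Theorem~\ref{thm:a_posteriori_distribution_stability} is phrased for two solutions in $R$, its proof uses only the percentile stability \eqref{eq:percentile_stability}, which Theorem~\ref{lem:percentile_meets_ass} establishes on the whole set $R \cup S$. Since $\mu_t \in R$ and $\mu_t^N \in S$, the very same argument applies verbatim and yields
\[
\| \pi(\theta|D) - \pi_N(\theta|D) \|_{TV} \leq C \left[ (2R_0 + 1)\, \left\| \frac{\mu_t - \mu_t^N}{r} \right\|_{BL^*} + e^{-R_0/2} \right].
\]

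The second step is to control the weighted flat distance $\left\| \frac{\mu_t - \mu_t^N}{r} \right\|_{BL^*}$ between the exact solution and its discretisation. This is precisely the quantity bounded by the convergence result for the numerical scheme, Theorem~\ref{thm:conv_particle_method}: the spatial discretisation on the grid $x_i = \frac{i}{N} R_0$ contributes an error of order $\frac{R_0^2}{N}$ (reflecting the mesh size $R_0/N$ over a domain of length $R_0$, with the weighting by $1/r$ taming the singularity of the interaction kernel $L$ at the origin), while truncating the support at $R_0$ contributes the tail term $e^{-R_0}$ through the decay estimate \eqref{eq:tail_estimate}. Substituting
\[
\left\| \frac{\mu_t - \mu_t^N}{r} \right\|_{BL^*} \leq C \left( \frac{R_0^2}{N} + e^{-R_0} \right)
\]
into the inequality above gives the claimed bound, after absorbing constants into $C$.

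Finally, the iterated limit is immediate from the resulting estimate. Fixing $R_0$ and letting $N \to \infty$ removes the $\frac{R_0^2}{N}$ term, leaving $C\left[(2R_0+1)\,e^{-R_0} + e^{-R_0/2}\right]$; then sending $R_0 \to \infty$ drives this to zero, since exponential decay dominates the polynomial factor $2R_0+1$. The only genuinely delicate point is the use of the weighted flat-norm convergence of the particle method with the explicit rate $\frac{R_0^2}{N} + e^{-R_0}$; establishing that rate is the content of the companion paper's analysis and is assumed here, so that the present argument is essentially a concatenation of the two stability estimates followed by the passage to the limit.
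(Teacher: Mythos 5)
Your proposal is correct and follows essentially the same route as the paper: the paper's proof simply sets $\nu_t = \mu_t^N$ in \eqref{eq:general_stability_equation} and invokes \eqref{eq:estimate_convergence_particle} to conclude. Your additional remark that Theorem~\ref{thm:a_posteriori_distribution_stability} extends to $\mu_t^N \in S$ because its proof only uses \eqref{eq:percentile_stability}, which Theorem~\ref{lem:percentile_meets_ass} establishes on all of $R \cup S$, is a valid clarification of a point the paper leaves implicit.
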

\begin{proof}
We let $\nu(t) = \mu^N(t)$ in \eqref{eq:general_stability_equation} and use \eqref{eq:estimate_convergence_particle} to conclude the proof.
\end{proof}


\section{Simulations Results}\label{SimulationsResults}

\newcommand{\alphaa}{ 1.7264}
\newcommand{\alphab}{ 0.3603}
\newcommand{\alphac}{ 0.3616}

\newcommand{\sigmaa}{ 0.0806}
\newcommand{\sigmab}{ 0.0479}
\newcommand{\sigmac}{ 0.0342}

\newcommand{\inita}{ 0.2469}
\newcommand{\initb}{ 0.3744}
\newcommand{\initc}{ 0.7518}

\newcommand{\obsa}{ 0.0957}
\newcommand{\obsb}{ 0.0649}
\newcommand{\obsc}{ 0.0256}

\noindent To perform Bayesian inference, i.e to predict the growth curve of diameters of multicellular spheroids and to identify parameters related to proliferation rate, kernel size, measurement error, and initial colony radius via maximum a posteriori (MAP) estimator we run 450,000 iterations of the Metropolis-Hastings algorithm (see Appendix 3) and we discarded the first 50,000 iteration as a burn-in. \figref{fig:2aa}, \figref{fig:2ba} and \figref{fig:2ca} present the predicted growth curves of diameters of multicellular spheroids (blue line) together with experimental measurements (the black dotes) and 95\% credible intervals for the prediction (light blue shadowed area) for L-5178Y cells, V-79 cells, and B-16 melanoma cells, respectively. \figref{fig:2ab}, \figref{fig:2bb} and \figref{fig:2cb} present marginal posterior densities of  parameters of interests for appropriate cell lines, whereas \figref{fig:2ac}, \figref{fig:2ac} and \figref{fig:2cc} stands for auto-correlation plots. Finally, \figref{fig:2ad}, \figref{fig:2bd} and \figref{fig:2cd} show the trace plots of the trajectories of \algref{algorytm}. For all simulations, we adjust the step size $s$ of proposal distribution from \eqref{space_sample_parameter} to achieve optimal acceptance ratio $\sim 23\%$. The codes used to perform presented simulations are available in the GitHub repository \cite{szymanska_github}.
\begin{figure}[htb!]
\begin{center}
  \begin{subfigure}{0.48\textwidth}  
  \begin{center}
    \includegraphics[width=1\linewidth]{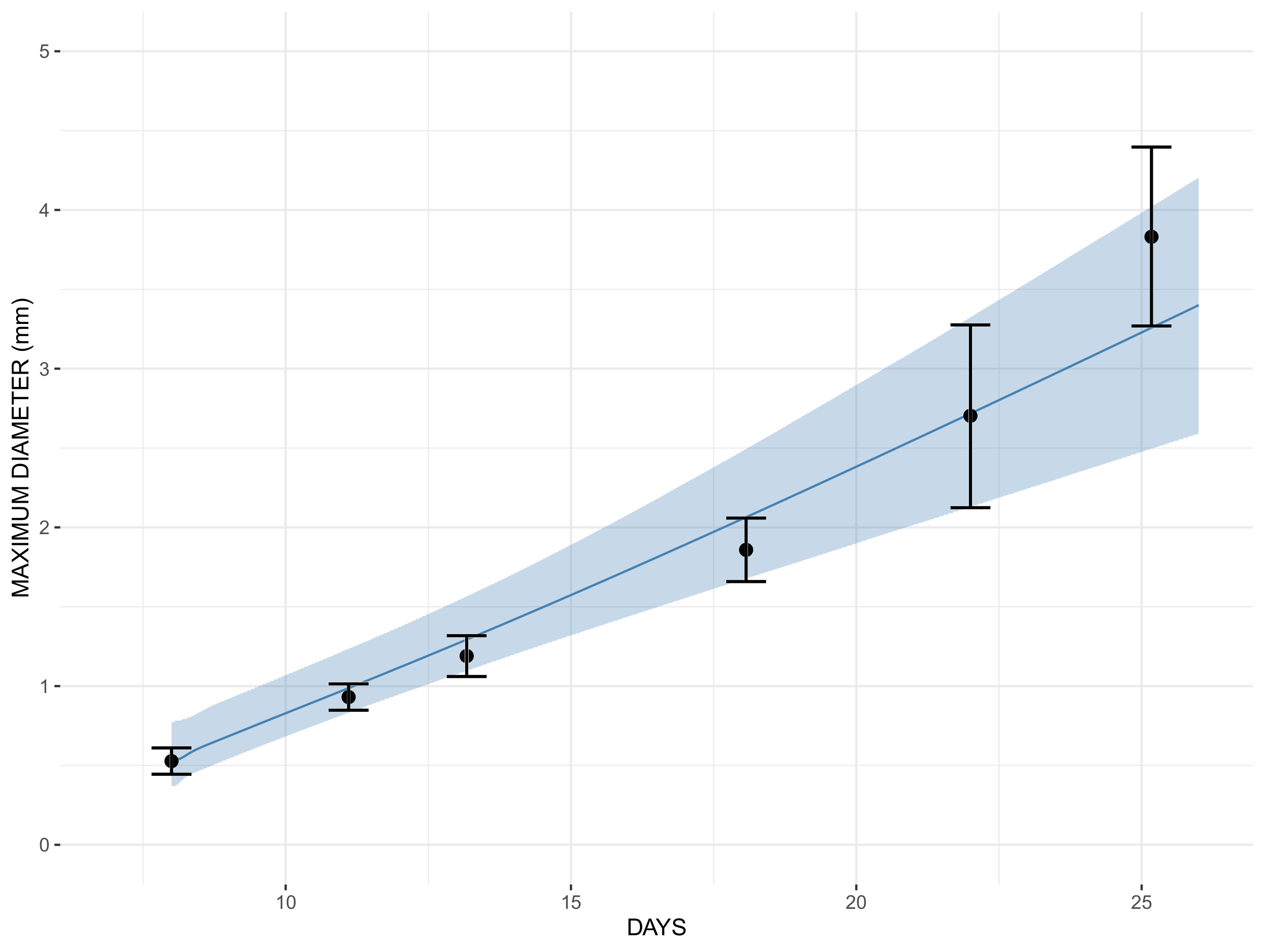}
    \caption{Predicted mean diameter of L-5178Y cells spheroids.}\label{fig:2aa}
  \end{center}
  \end{subfigure}
  \begin{subfigure}{0.48\textwidth}
  \begin{center}
    \includegraphics[width=1\linewidth]{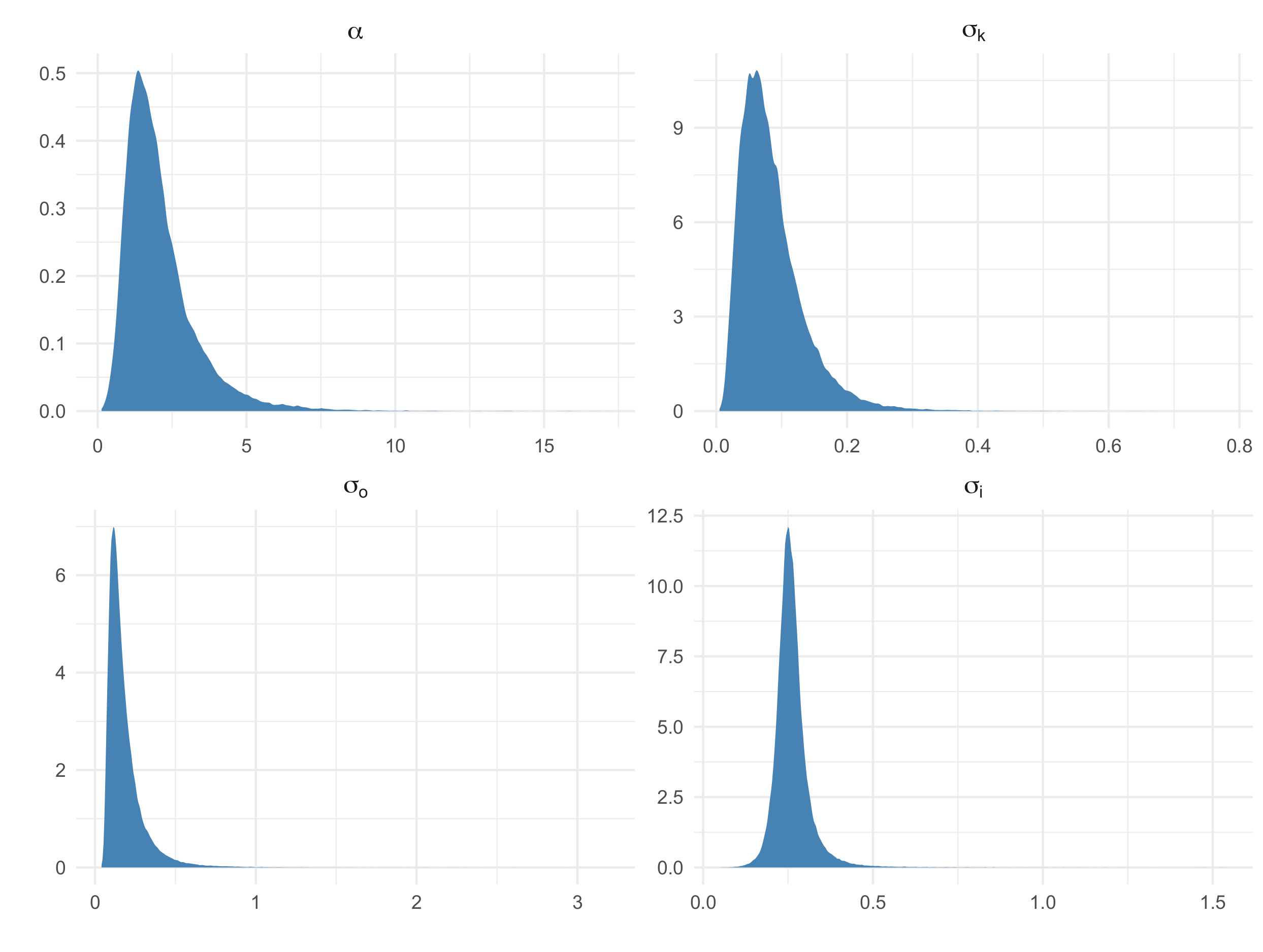}
    \caption{Marginal posterior distributions of estimated \\ parameters.} \label{fig:2ab}
  \end{center}
  \end{subfigure}
  \begin{subfigure}{0.48\textwidth}  
  \begin{center}
    \includegraphics[width=1\linewidth]{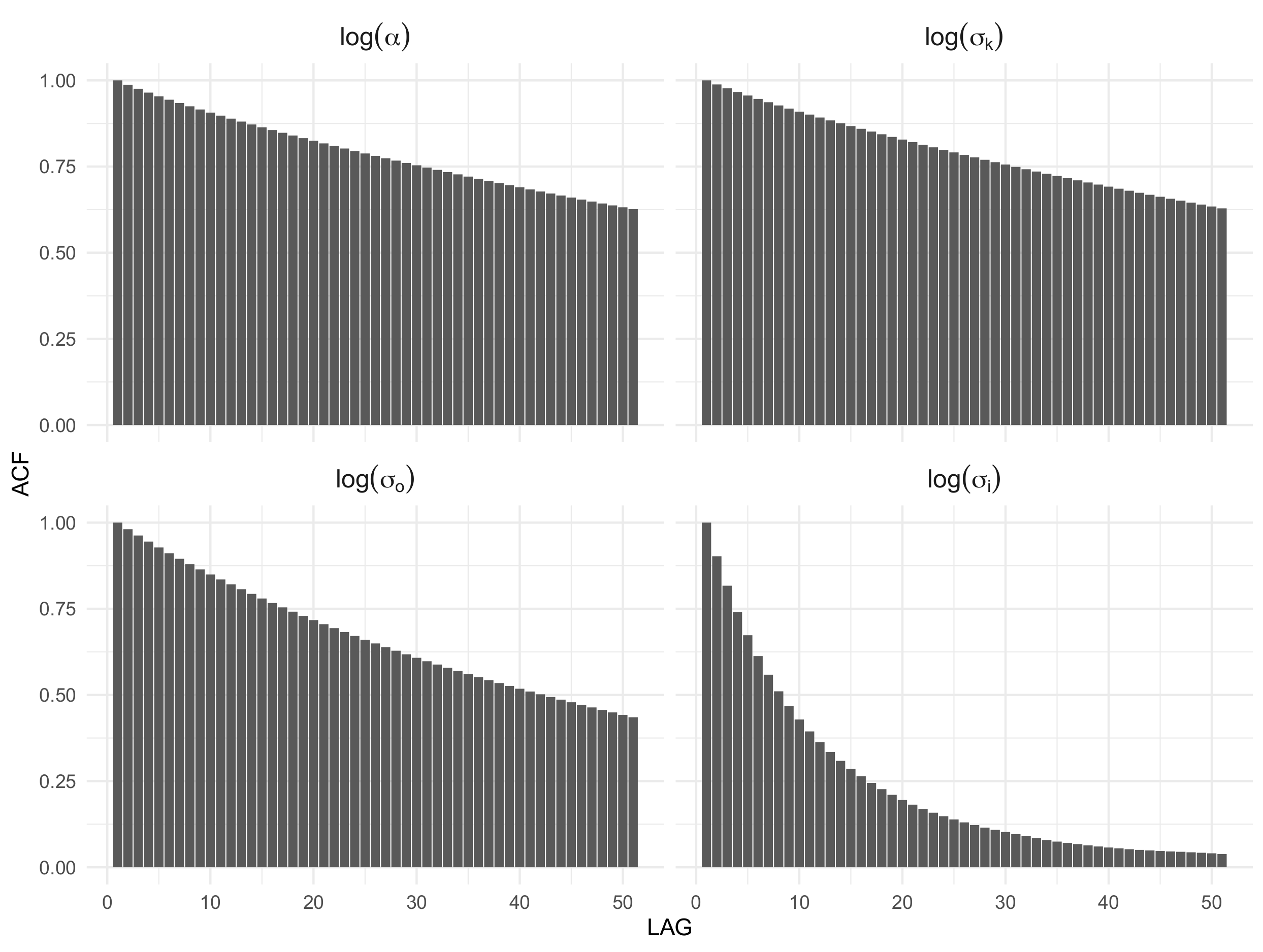}
    \caption{Autocorrelation plots.}\label{fig:2ac}
  \end{center}
  \end{subfigure}
  \begin{subfigure}{0.48\textwidth}
  \begin{center}
    \includegraphics[width=1\linewidth]{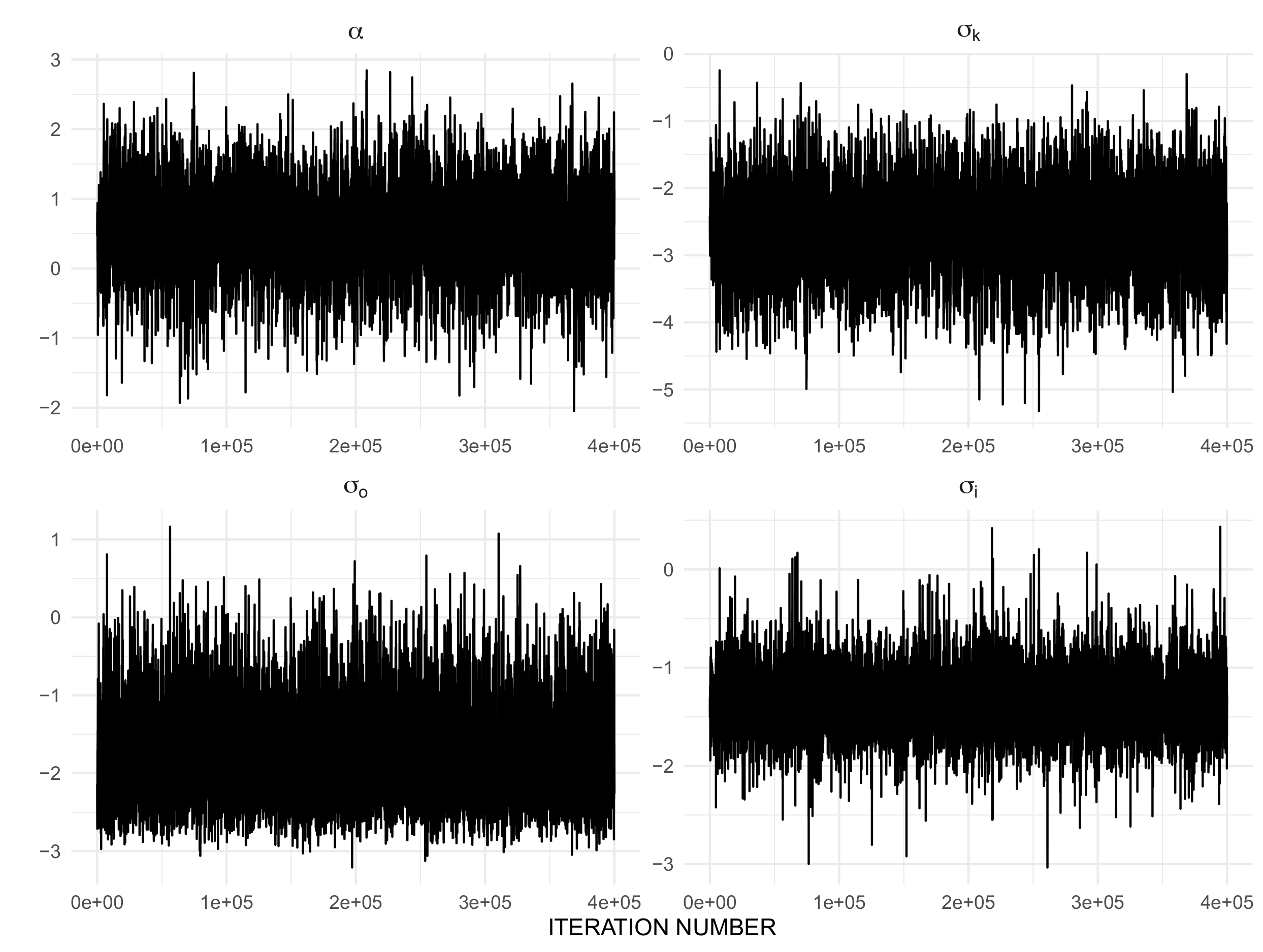}
    \caption{Trace plots of the trajectories.} \label{fig:2ad}      
  \end{center}
  \end{subfigure}
\vspace{0.5cm}
\caption{
The figure shows the results of Bayesian inference for model \eqref{non-local_proliferation} using the data on L-5178Y cells provided by Folkman \& Hochberg \cite[\tfig{}~2a]{folkman}. Plots with (a) label show the predicted mean diameters of spheroids, with black dots standing for measurements, the blue lines presenting the diameters predicted by the model, and finally, the shadow areas indicate the 95\% credibility intervals for the predictions. Plots with (b) labels present the marginal posterior distributions of estimated parameters, whereas plots with labels (c) show the auto-correlation and plots with labels (d) stand for the trace plots of the trajectories of a random walk Metropolis. Simulations performed for $\tilde{\sigma_i} = 1.065\cdot \sigma_i$ and $q=13$, cf. \eqref{initial_condition}.
} \label{fig:2a}
\end{center}
\end{figure}
\begin{figure}[htb!]
\begin{center}
  \begin{subfigure}{0.49\textwidth}  
  \begin{center}
    \includegraphics[width=1\linewidth]{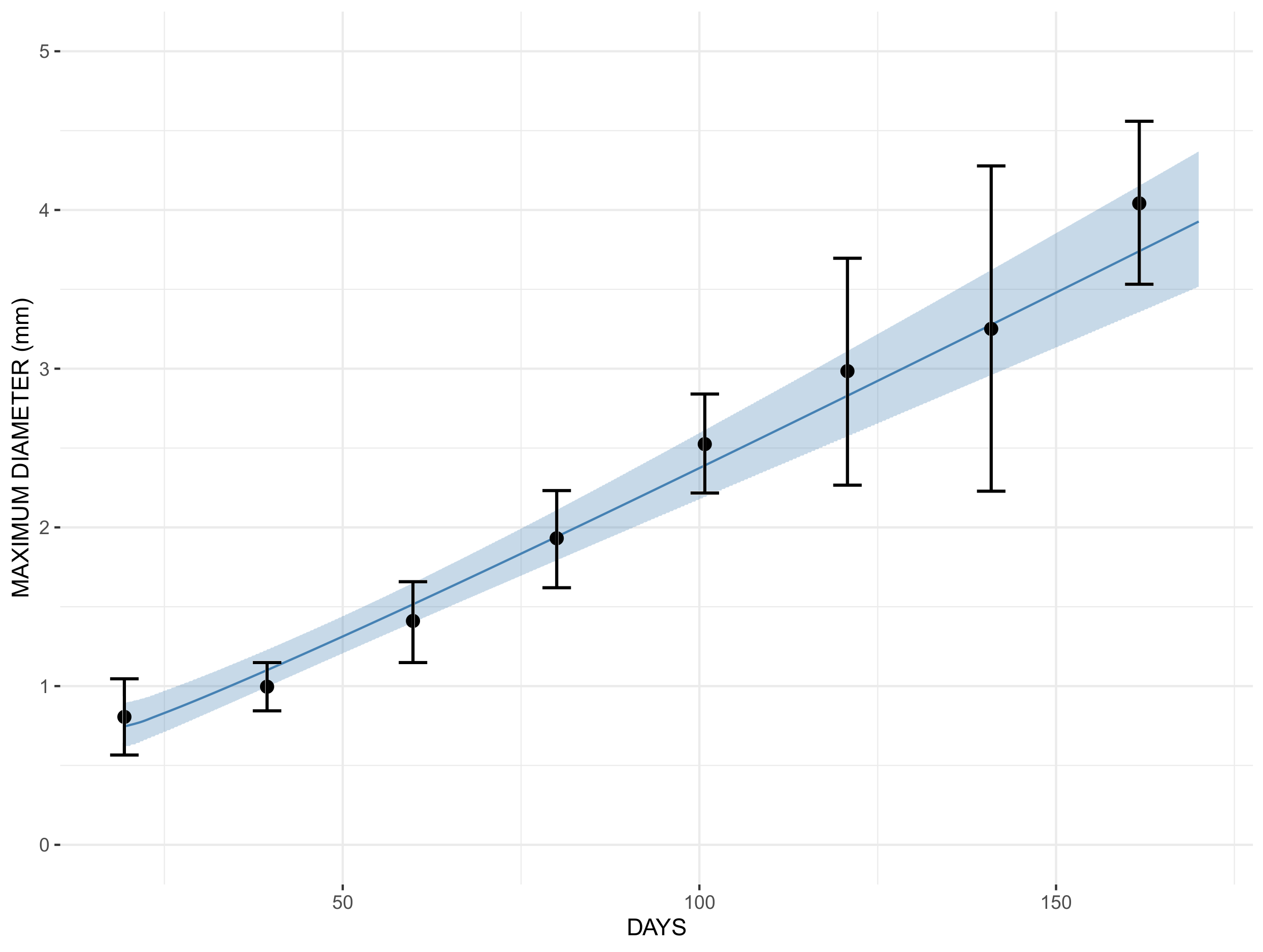}
    \caption{Predicted mean diameter of V-79 cells spheroids.} \label{fig:2ba}
  \end{center}
  \end{subfigure}
  \begin{subfigure}{0.49\textwidth}
  \begin{center}
    \includegraphics[width=1\linewidth]{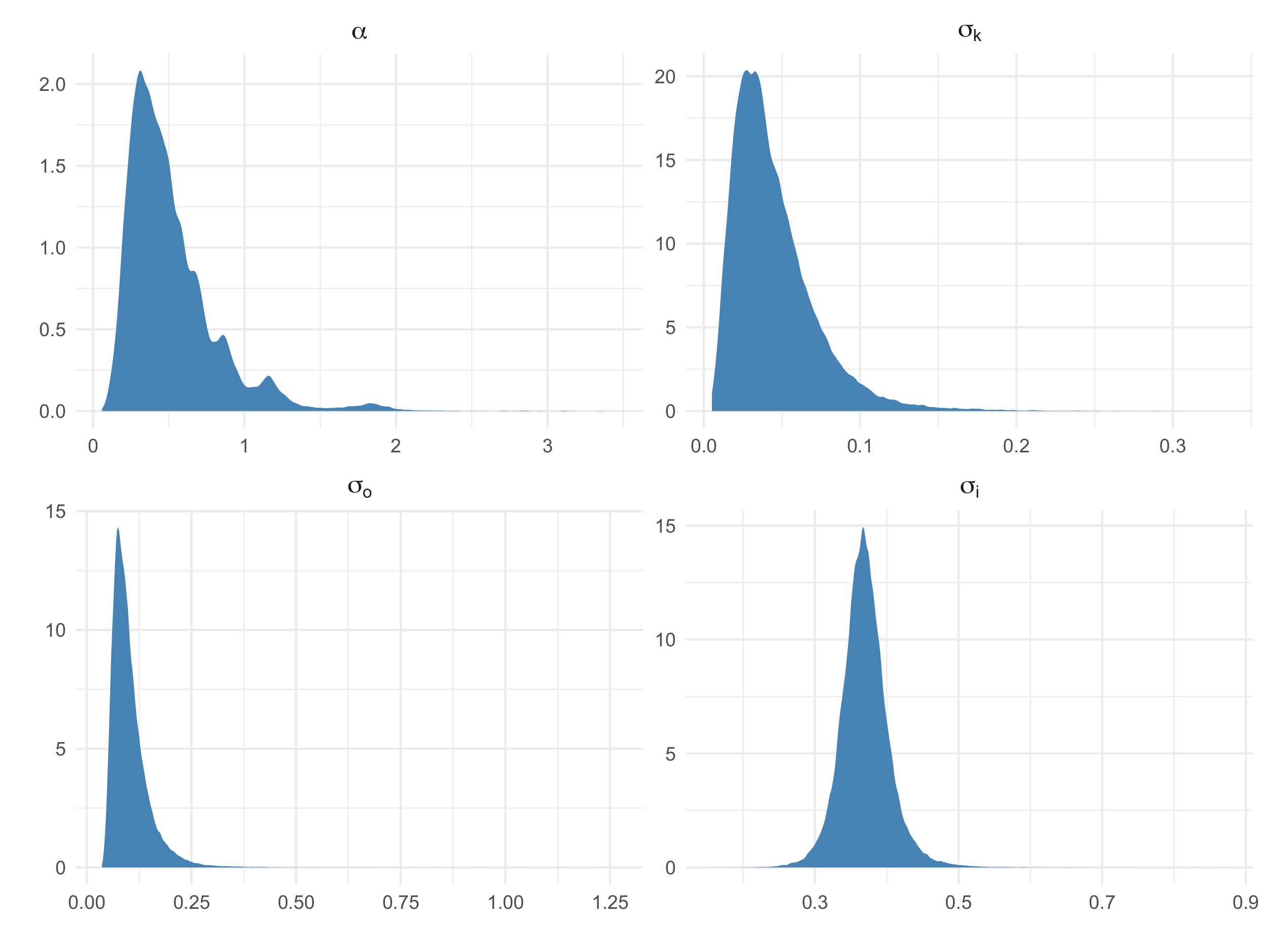}
    \caption{Marginal posterior distributions of estimated parameters.} \label{fig:2bb}
  \end{center}
  \end{subfigure}
  \begin{subfigure}{0.49\textwidth}  
  \begin{center}
    \includegraphics[width=1\linewidth]{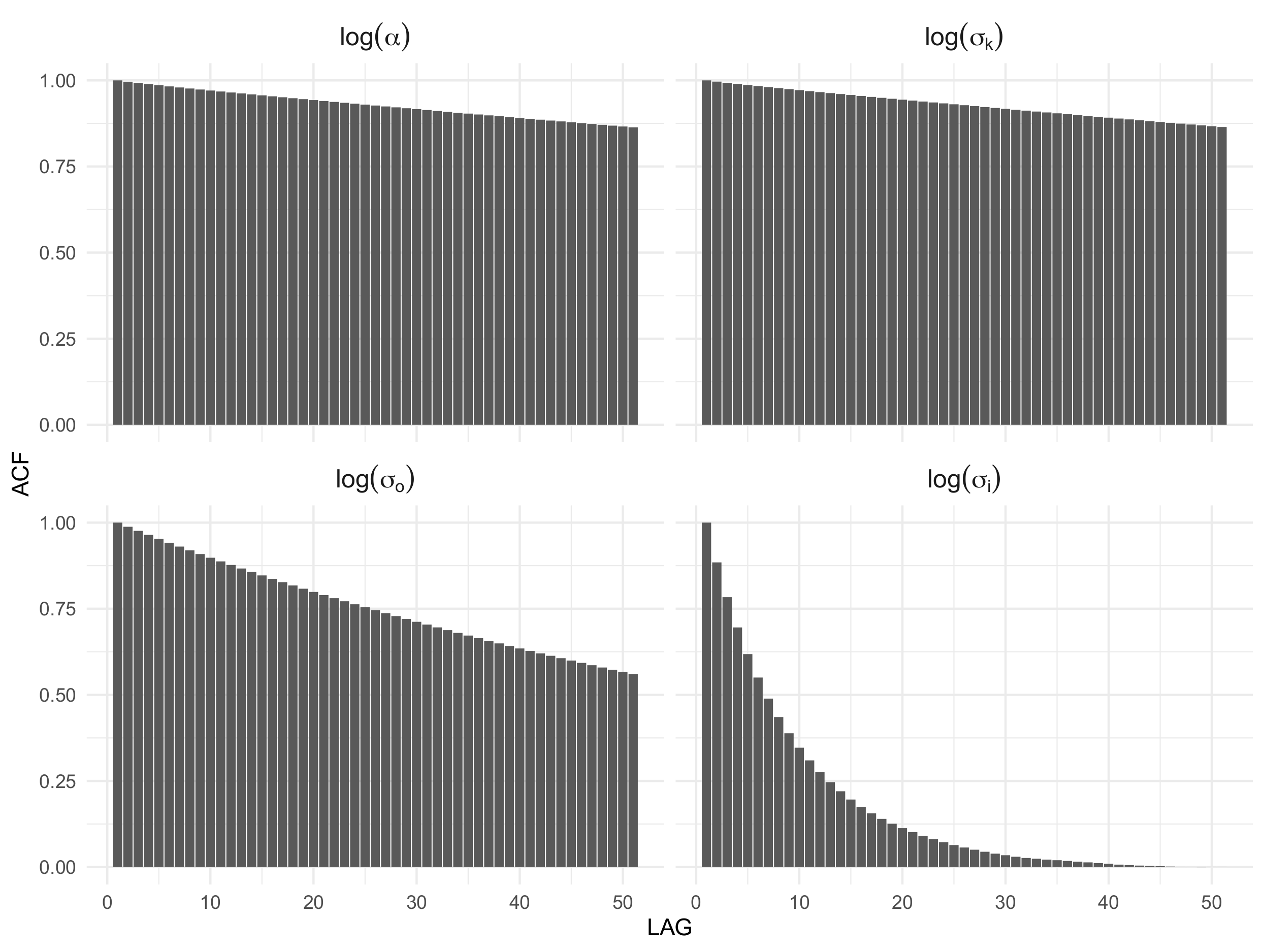}
    \caption{Autocorrelation plots.} \label{fig:2bc}
  \end{center}
  \end{subfigure}
  \begin{subfigure}{0.49\textwidth}
  \begin{center}
    \includegraphics[width=1\linewidth]{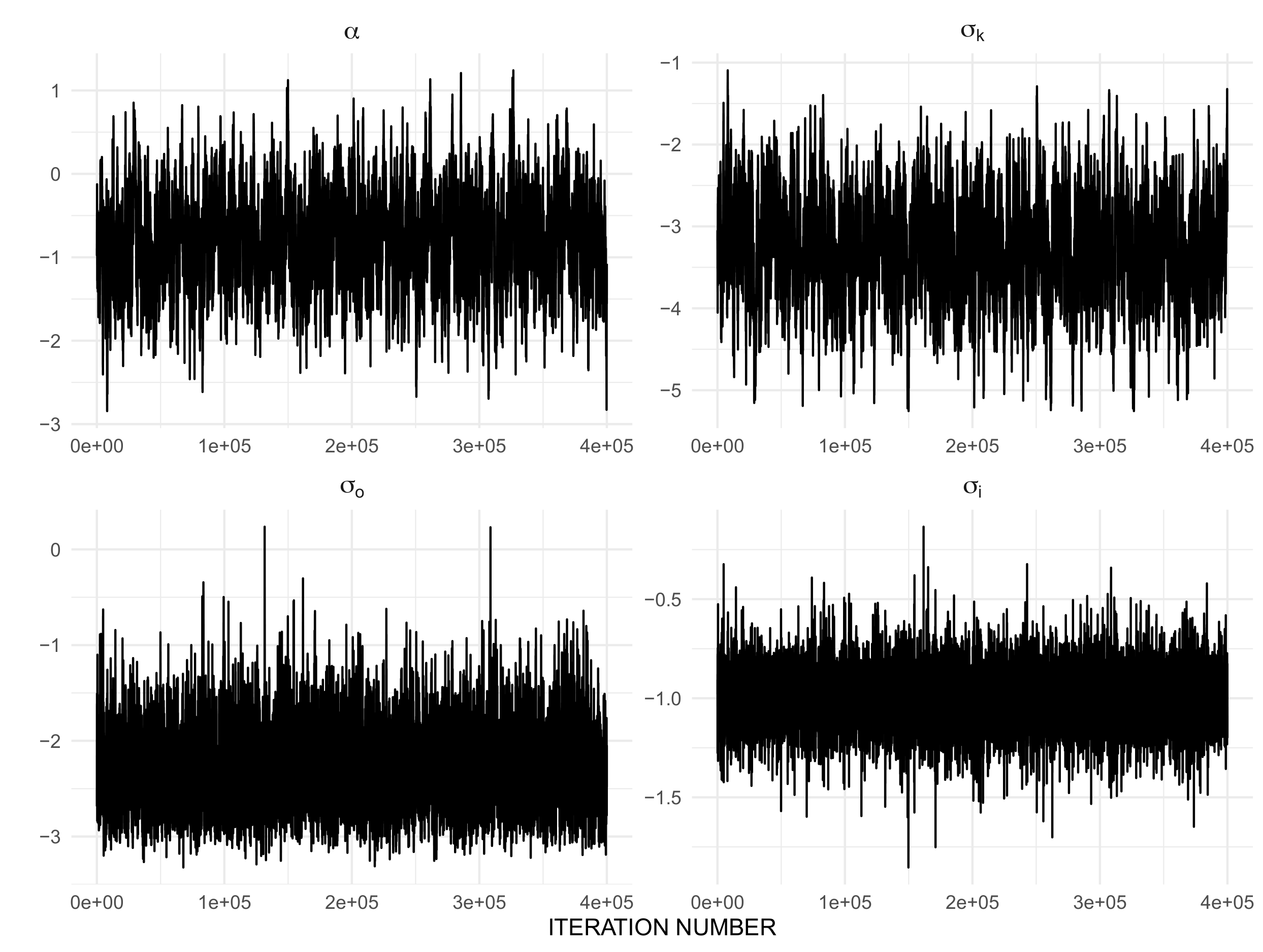}
    \caption{Trace plots of the trajectories.} \label{fig:2bd}      
  \end{center}
  \end{subfigure}
\vspace{0.5cm}
\caption{The figure shows the results of Bayesian inference for model \eqref{non-local_proliferation} using the data on V-79 cells provided by Folkman \& Hochberg \cite[\tfig{}~2b]{folkman}. Plots with (a) label show the predicted mean diameters of spheroids, with black dots standing for measurements, the blue lines presenting the diameters predicted by the model, and finally, the shadow areas indicate the 95\% credibility intervals for the predictions. Plots with (b) labels present the marginal posterior distributions of estimated parameters, whereas plots with labels (c) show the auto-correlation and plots with labels (d) stand for the trace plots of the trajectories of a random walk Metropolis. Simulations performed for $\tilde{\sigma_i} = 1.065\cdot \sigma_i$ and $q=13$, cf. \eqref{initial_condition}.
} \label{fig:2b}
\end{center}
\end{figure}
\begin{figure}[htb!]
\begin{center}
  \begin{subfigure}{0.48\textwidth}  
  \begin{center}
    \includegraphics[width=1\linewidth]{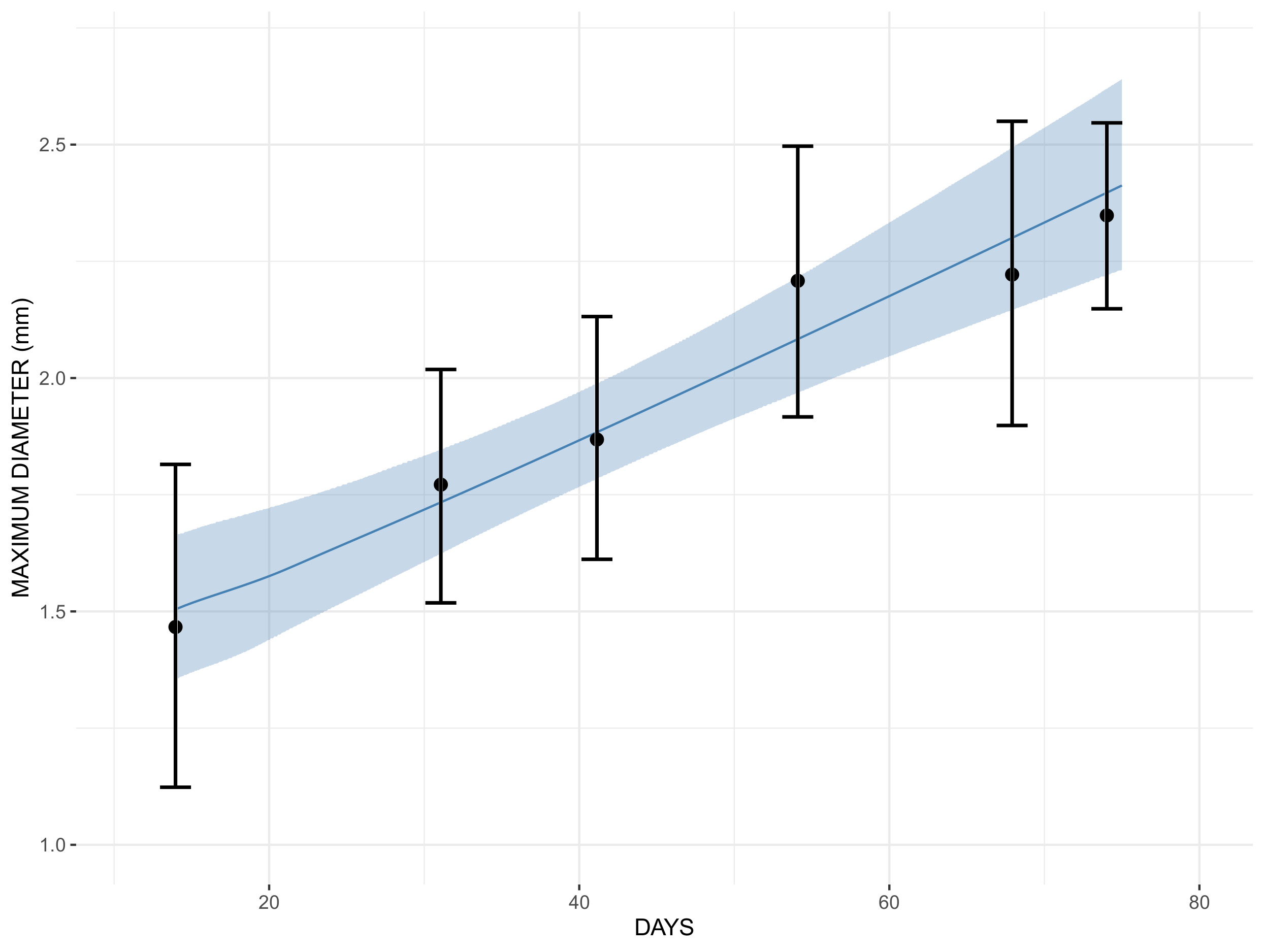}
    \caption{Predicted mean diameter of B-16 melanoma cells spheroids.} \label{fig:2ca}
  \end{center}
  \end{subfigure}
  \begin{subfigure}{0.48\textwidth}
  \begin{center}
    \includegraphics[width=1\linewidth]{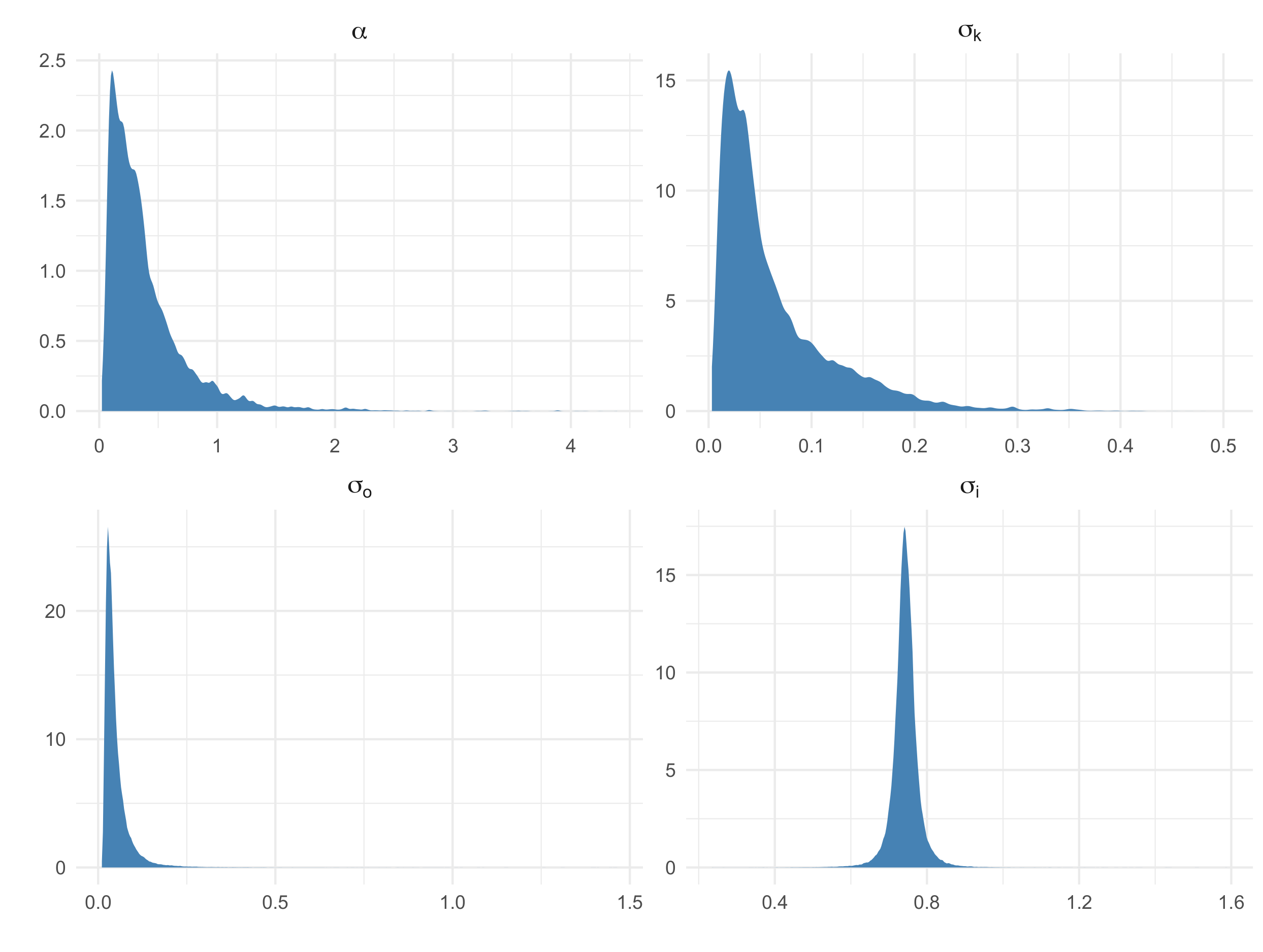}
    \caption{Marginal posterior distributions of estimated parameters.} \label{fig:2cb} 
  \end{center}
  \end{subfigure}
  \begin{subfigure}{0.48\textwidth}  
  \begin{center}
    \includegraphics[width=1\linewidth]{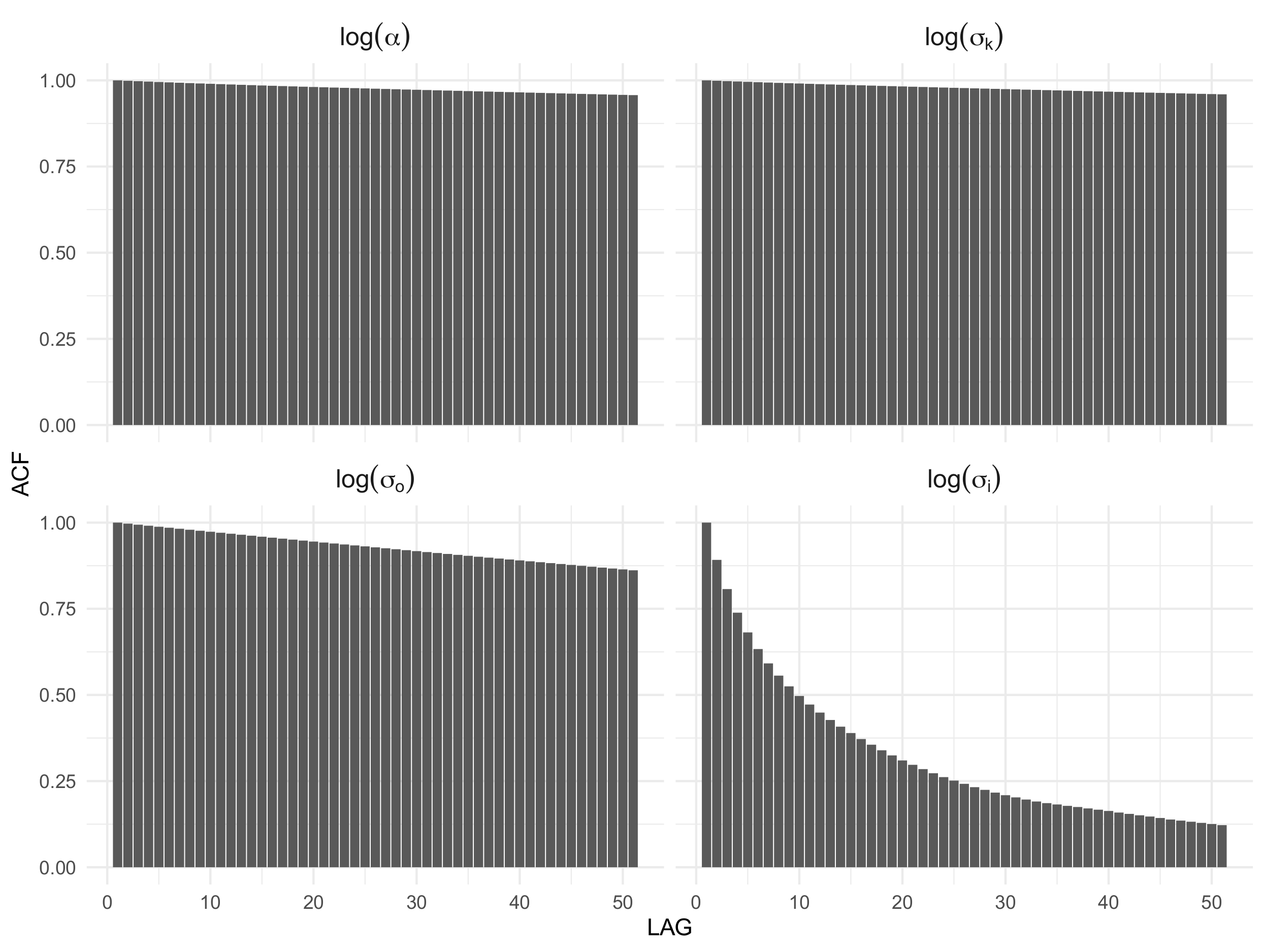}
    \caption{Autocorrelation plots.} \label{fig:2cc}
  \end{center}
  \end{subfigure}
  \begin{subfigure}{0.48\textwidth}
  \begin{center}
    \includegraphics[width=1\linewidth]{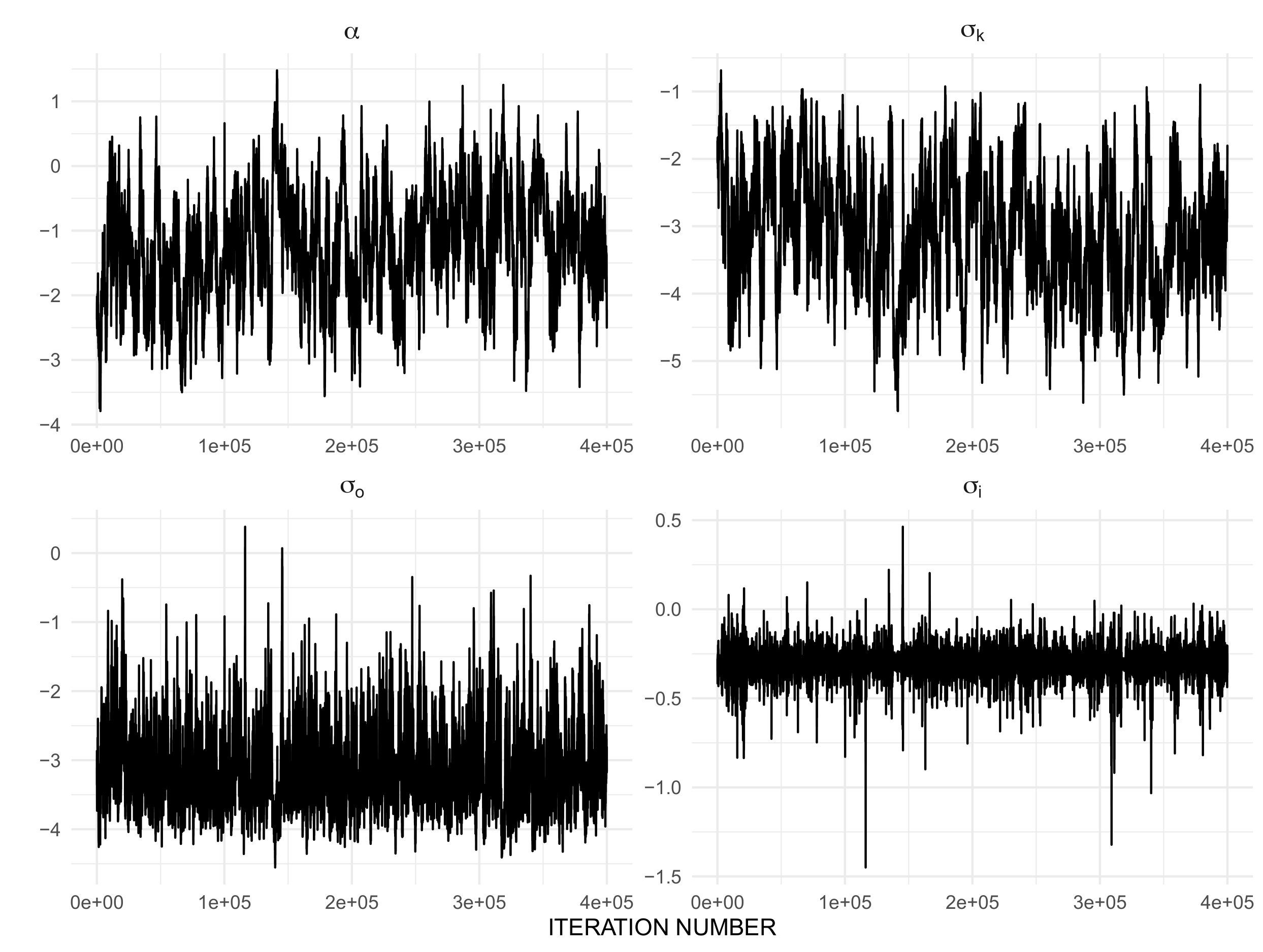}
    \caption{Trace plots of the trajectories.} \label{fig:2cd}      
  \end{center}
  \end{subfigure}
\vspace{0.5cm}
\caption{
The figure shows the results of Bayesian inference for model \eqref{non-local_proliferation} using the data on B-16 melanoma cells provided by Folkman \& Hochberg \cite[\tfig{}~2c]{folkman}. 
Plot with (a) label shows the predicted mean diameters of spheroids, with black dots standing for measurements, the blue lines presenting the diameters predicted by the model, and finally, the shadow areas indicate the 95\% credibility intervals for the predictions. Plot with (b) label presents the marginal posterior distributions of estimated parameters, whereas plot with (c) label shows the auto-correlation, and plot with (d) label stands for the trace plots of the trajectories of a random walk Metropolis. Simulations performed for $\tilde{\sigma_i} = 1.06\cdot \sigma_i$ and $q=13$, cf. \eqref{initial_condition}.
} \label{fig:2c}
\end{center}
\end{figure}
\begin{figure}
\begin{center}
  \begin{subfigure}{0.3\textwidth}  
  \begin{center}
    \includegraphics[width=1\linewidth]{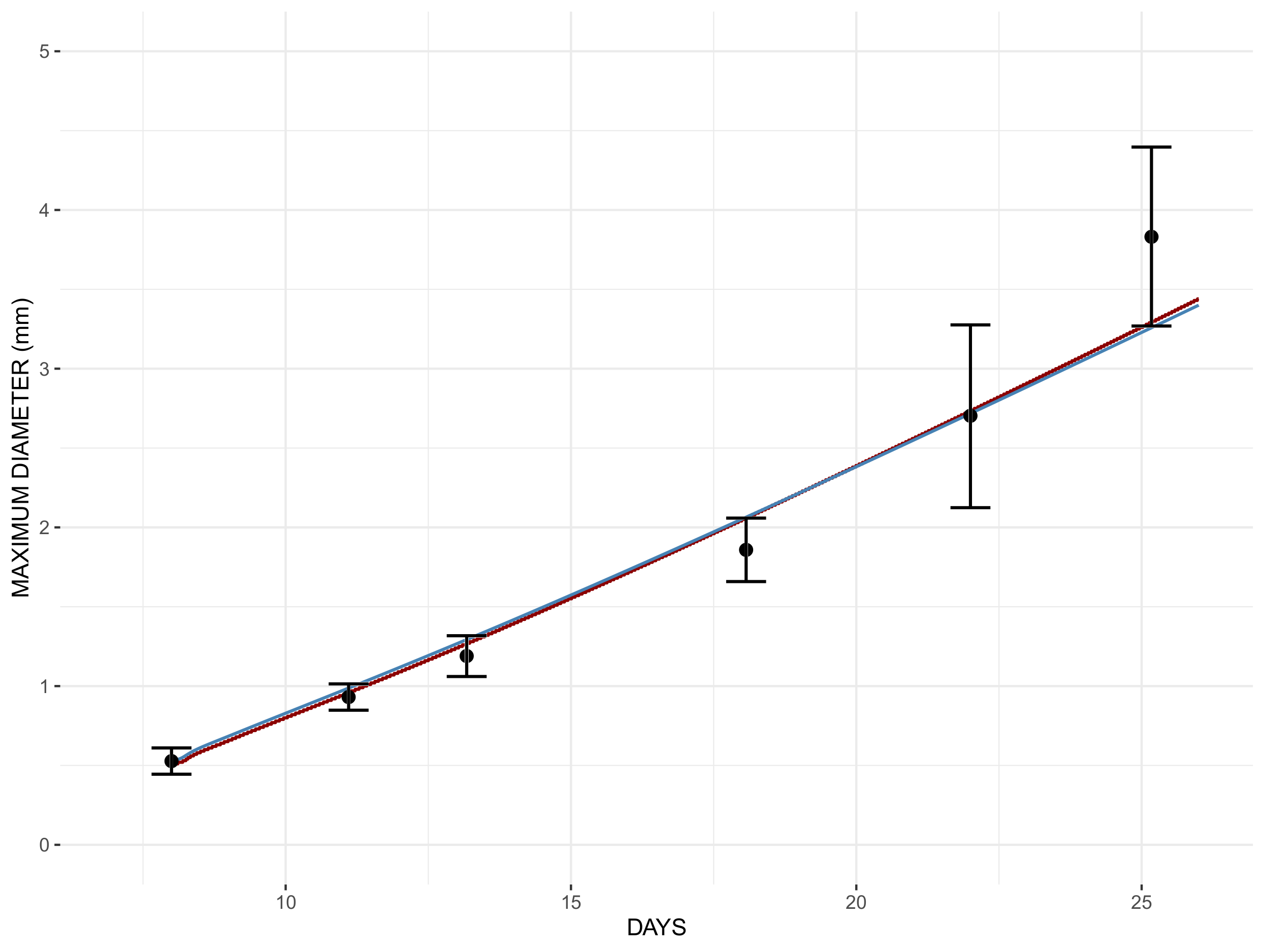}
    \caption{\footnotesize{L-5178Y cell line}} \label{Besta}
  \end{center}
  \end{subfigure}
  \begin{subfigure}{0.3\textwidth}
  \begin{center}
    \includegraphics[width=1\linewidth]{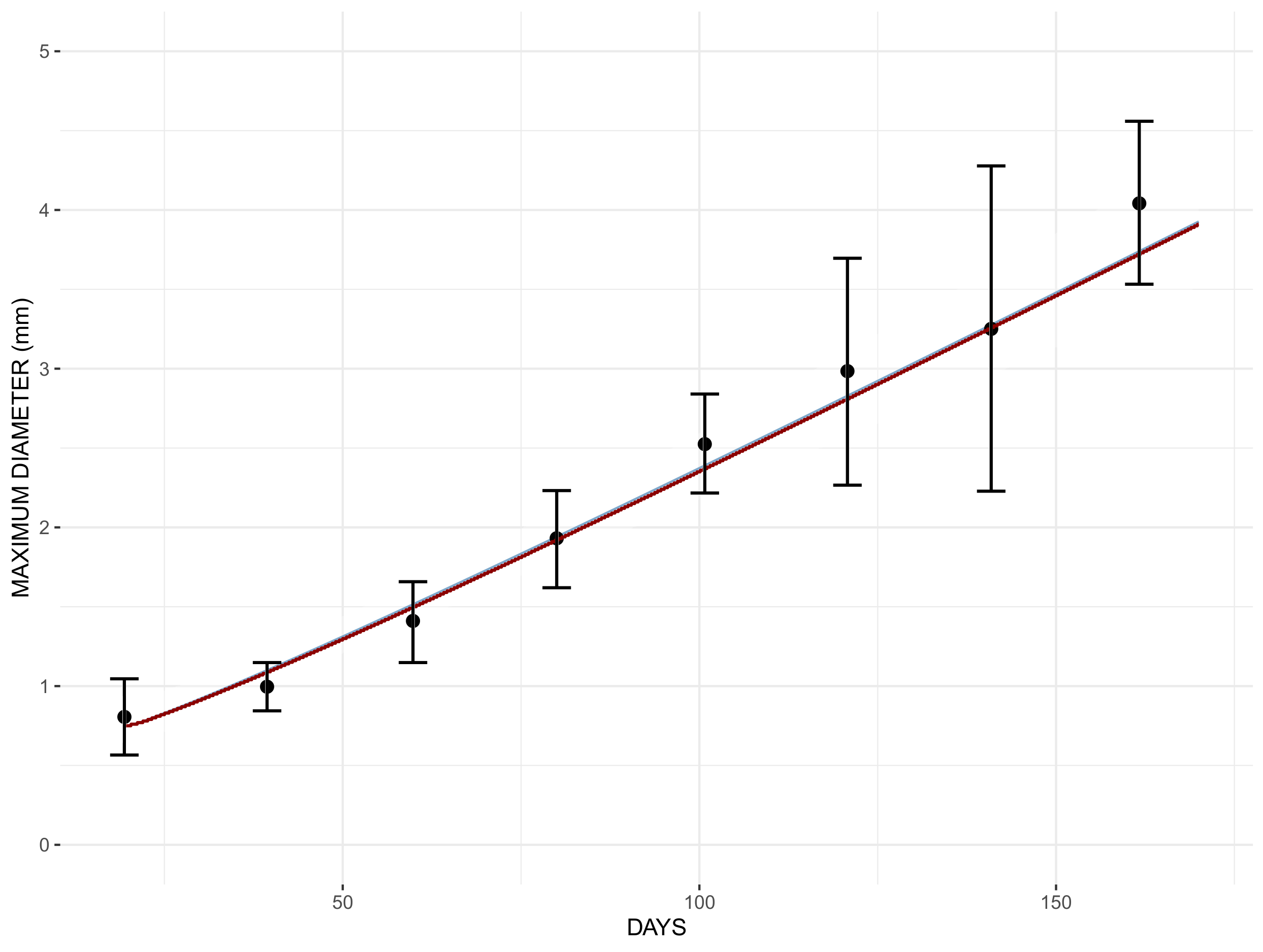}
    \caption{\footnotesize{V-79 cell line}} \label{Bestb}      
  \end{center}
  \end{subfigure}
  \begin{subfigure}{0.3\textwidth}
  \begin{center}
    \includegraphics[width=1\linewidth]{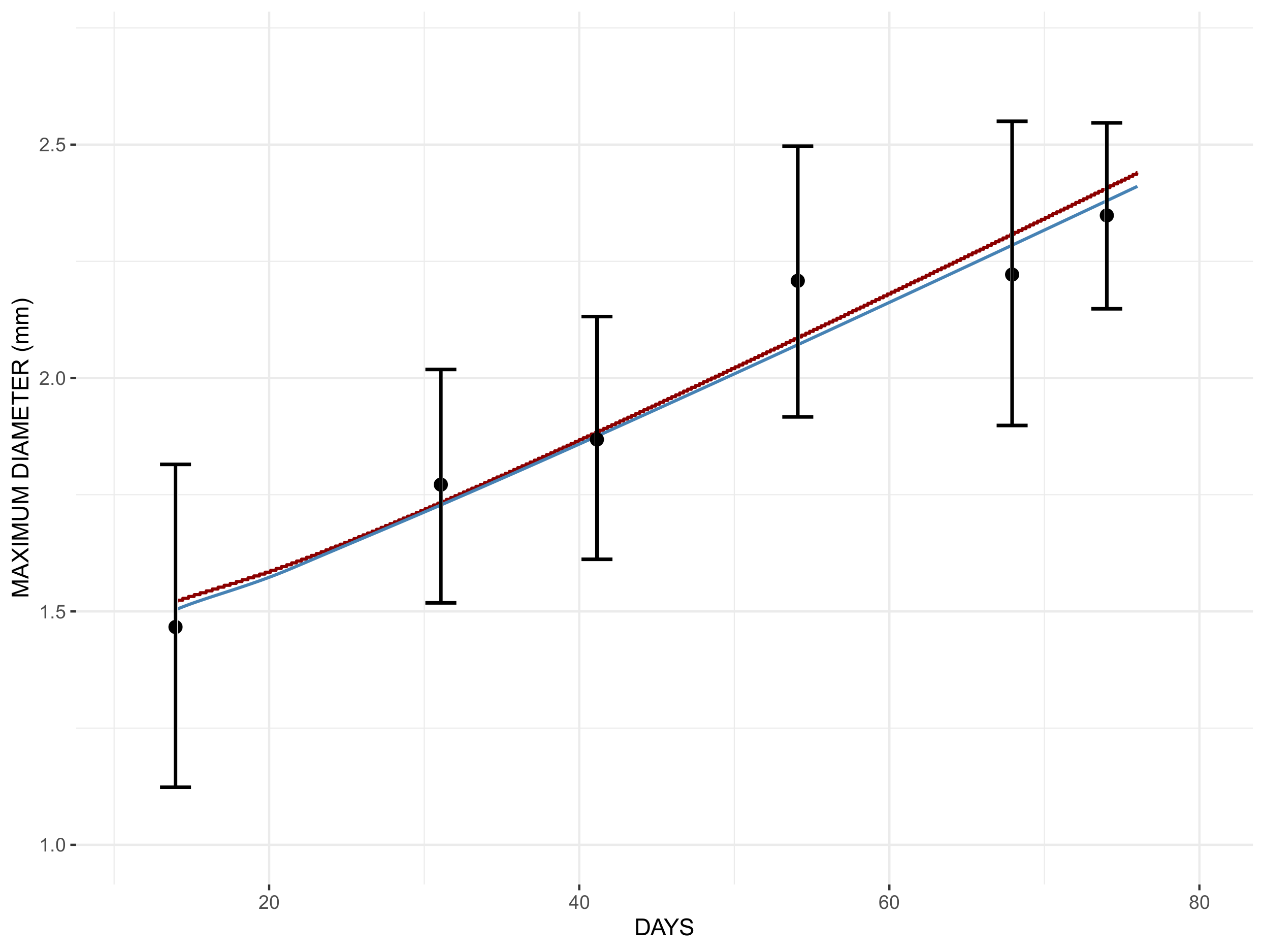}
    \caption{\footnotesize{B-16 melanoma cell line}} \label{Bestc}
  \end{center}
  \end{subfigure}
\caption{Predictions of growth curves of diameters of spheroids obtained via MAP estimator - drawn on diagrams on red. Plot with (a) label shows the prognosis for mouse lymphoma L-5178Y cells obtained for $\alpha=$\alphaa{}, $\sigma_k=$\sigmaa{}, $\sigma_o=$\obsa{}, and $\sigma_i=$\inita{}. Plot with (b) label presents the prediction for the Chinese hamster lung cell line obtained for parameters $\alpha=$\alphab{}, $\sigma_k=$\sigmab{}, $\sigma_o=$\obsb{}, and $\sigma_i=$\initb{}. Plot with (c) label shows the best fit of predicted growth curve of B-16 melanoma cell line obtained for $\alpha=$\alphac{}, $\sigma_k=$\sigmac{}, $\sigma_o=$\obsc{}, and $\sigma_i=$\initc{}. For comparison, the blue lines stand for appropriates the Bayesian predictions redrawn from \figref{fig:2aa}, \figref{fig:2ba} and \figref{fig:2ca}. In all plots, both lines almost overlap.} \label{fig:5}
\end{center}
\end{figure}

The Bayesian prediction proves to be very accurate for the prognosis of dynamics of diameters of multicellular spheroids. However, for practical purposes, namely quantitative modelling of cancer growth the MAP estimator seems to be more accurate. Using the MAP estimator we obtain the prediction of diameters dynamics very close to the Bayesian one however, this approach allows us to obtain more accurate parameters for the proposed proliferation function \eqref{non-local_proliferation}. \figref{fig:5} presents the predicted dynamics of diameters for all considered data sets, whose prediction were obtained using the MAP estimator (red curve) and the Bayesian estimator (blue line). Using the MAP estimator we obtained $\alpha=$\alphaa{}, $\sigma_k=$\sigmaa{}, $\sigma_o=$\obsa{}, and $\sigma_i=$\inita{} for the mouse lymphoma L-5178Y cells, see \figref{Besta}. Adopting the same estimator we get $\alpha=$\alphab{}, $\sigma_k=$\sigmab{}, $\sigma_o=$\obsb{}, and $\sigma_i=$\initb{} for the Chinese hamster lung cell line V-79, see \figref{Bestb}. Finally, for B-16 melanoma cell line we get the MAP estimator $\alpha=$\alphac{}, $\sigma_k=$\sigmac{}, $\sigma_o=$\obsc{}, and $\sigma_i=$\initc{}, see \figref{Bestc}. 

While analysing the trace-plots and auto-correlation plots becomes noticeable that the algorithm converges with different speeds along different dimensions of parameter space. Moreover, we see that for L-5178Y and V-79 cells (data sets a and b) MCMC algorithm mixes rather well, while the convergence of the algorithm for B-16 cells (data set c) is significantly slower. 
Perhaps this is due to the correlation between parameters $\alpha$ and $\sigma_k$ that for the cell line whose linear growth is the slowest becomes more apparent. We speculate, that for such challenging cases might be worth trying more sophisticated algorithms Metropolis-Hastings MCMC, however, the issue goes beyond the scope of the current paper, whose main aim was to propose a new proliferation function suitable to incorporate into models describing solid tumour dynamics.
Finally, we observe that estimated parameters in all examples are similar, which bolsters the surmise that the model does not overfit the data. Therefore, our model provides a quite good approximation of reality in the considered time window. 


\section{Conclusions}\label{sec:conclusions}


\noindent In this paper, we propose a non-local function \eqref{non-local_proliferation} to describe the proliferation dynamics of cells living within a colony whose growth is restricted to the outer layer of several viable individuals. To estimate the range of applicability of the model, we refer to the experimental data on cancer multicellular spheroids growth provided by Folkman \& Hochberg \cite{folkman}. To deal with the low regularity of the kernel given by \eqref{kernel} as well as to improve solution accuracy and algorithm performance we reformulate the initial model using radial coordinates \eqref{non-local_proliferation_radial}. Then, we performed parameter estimation of the model given by \eqref{non-local_proliferation_radial} based on three data sets on the dynamics of multicellular spheroids growth in 3D culture with medium being replenish and open space being available. For all considered data sets, we observe that the dynamics of the colonies' growths predicted by our model are quite accurate.

An interesting question arises, to what extent the proposed description of cell proliferation is suitable to incorporate into more complex cancer models. Whether the introduction of the proposed non-local proliferation function will bring more accurate quantitative predictions of solid tumour growth or not? To answer that question it seems interesting to relate the estimated kernel radii to the distance that oxygen and nutrients can effectively diffuse into living tissue. It is known that the threshold that oxygen can effectively diffuse through tissue is about 0.2 mm \cite{weinberg}. Considering that not only oxygen is needed to keep cells alive, but also nutrients, whose molecules are larger, the distance between capillaries and the necrotic core will be smaller than mentioned 0.2 mm. Weinberg quotes the values 85 $\mu m$ for human melanoma and 110 $\mu m$ for rat prostate carcinoma \cite{weinberg,hlatky2002}. We do not have similar data for cell lines under consideration, however, one of our estimated cases concerns B-16 cells, which is a murine melanoma tumour cell line used for research as a model for human skin cancers. The estimated kernel radius, although obtained for {\it in vitro} regimes, remains in good quantitative accordance with these data. Recall that $\sigma_k$ equal to \sigmac{} corresponds to $\sim$ 68 $\mu m$ of a layer of viable cells.  For L-5178Y and V-79 cell lines obtained kernel size values correspond to 150 $\mu m$ and 90 $\mu m$ layer of viable cells, respectively. Following our theoretical consideration about the range of applicability of the proposed model, after its calibration against the experimental data, we postulate its suitability for describing proliferation in cell colonies whose growth is restricted to outer layers of viable cells. Let us mention that such a scenario is typical for most solid tumours, which, due to the lack of a regular blood vessels network, typical for healthy tissues, develop its blood supply through the process of angiogenesis that results in a pathological capillary network producing numerous necrotic regions.

While analysing proliferation parameters for considered cell lines obtained with the MAP estimator becomes conspicuous that the value \alphaa{} obtained for the L-5178Y cell line is noticeably larger than the values obtained for V-79 and B16 that are \alphab{}, and \alphac{}, respectively. The fact becomes more comprehensible if one considers also the dynamics of the entire colonies. Spheroids composed of L-5178Y cells grow much faster and reach a diameter of about 4 mm after only 30 days. The growth is very fast but lasts shortly. What's more, the estimated value corresponds to the cell doubling time of about 10 hours, which is exactly the value reported in databases \cite{L5178Y}. In summary, it seems that in the initial growth of the L-5178Y spheroid, the presence of neighbourhood cells does not slowdowns the proliferation of L-5178Y cells. The proliferation parameters obtained for V-79 and B-16 cells are more similar to each other and equal to \alphab{} and \alphac{}, respectively. More similar values of the results are not surprising as the spheroids composed of these cells also have more similar dynamics. Interestingly, the obtained values correspond to the division times for V-79 and B-16 cells approximately 46 hours. This means a slowing down of the rate of divisions 2.8 times and 2.5 times, respectively. It is generally thought that cells in a colony are 2-3 times slower to divide. Our results for the V-79 and B-16 cells fit exactly into this framework.

Summing up our work we state that comprehensive calibration of complex models describing the dynamics of the cancer disease {\it in vivo} seems for the moment to be out of reach, first of all, due to the lack of relevant data but also due to computational complexity of such tasks. Therefore, it seems appropriate to create at first partial, properly calibrated models, that describe phenomena contributing to cancerogenesis and then combine them into more complex models to get more quantitative insight into the pathology of cancer development.

\section*{Acknowledgments} Z.~Szyma\'nska, B.~Miasojedow and P.~Gwiazda acknowledge the support from the National Science Centre, Poland -- grant No. 2017/26/M/ST1/00783. J.~Skrzeczkowski was supported by the National Science Centre, Poland -- grant No. 2019/35/N/ST1/03459. The calculations were made with the support of the Interdisciplinary Centre for Mathematical and Computational Modelling of the University of Warsaw under the computational grant no. G79-28. \\
\indent All authors would like to express their gratitude to Michał Dzikowski and Bartosz Niezgódka from the Interdisciplinary Centre for Mathematical and Computational Modelling of the University of Warsaw for their valuable help in performing high performance simulations. 

\bibliographystyle{abbrv}
\bibliography{bibliography}

\appendix
\section{Lipschitz continuity of the inverse function} \label{ssect:cont_inv_func}
\noindent A simple lemma, which is in fact an inverse function theorem with a parameter. We use this lemma while proving that the inverse of the cumulative distribution function satisfies the continuity estimate. Recall that, if $f: \R \to (a,b)$ satisfies $f'(x)>0$ then $f$ is globally invertible on $(a,b)$. Note that we are taking advantage of the fact that the problem under consideration is now one-dimensional since for multidimensional cases only local invertibility is true.
\begin{lem}\label{lem:invertibility_with_parameter}
Let $(S,d)$ be a metric space and $R \subset S$. Consider function $f(x,s): (c,d) \times R \to (a,b)$ such that $f$ is differentiable with respect to $x$ and $f_x(x,s) > \delta > 0$. 
Let $f_s^{-1}$ be inverses of maps $x \mapsto f(x,s)$ with fixed $s$. Suppose that there are constants $C, D$ such that
$$
\left|f(x,s_1) - f(x,s_2)\right| \leq C\,\left( d(s_1,s_2) + D\right).
$$
Then, for all $y \in (a,b)$,
$$
|f_{s_1}^{-1}(y) - f_{s_2}^{-1}(y)| \leq \frac{C}{\delta} \, \left( d(s_1,s_2) + D \right).
$$
\end{lem}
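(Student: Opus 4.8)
The plan is to fix the target value $y \in (a,b)$ and compare the two preimages directly. First I would set $x_1 := f_{s_1}^{-1}(y)$ and $x_2 := f_{s_2}^{-1}(y)$, so that by the definition of the inverse maps $f(x_1,s_1) = y = f(x_2,s_2)$. The goal then reduces to estimating $|x_1 - x_2|$ in terms of $d(s_1,s_2)$ and $D$.

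The key algebraic step is to insert and subtract the mixed term $f(x_1, s_2)$, exploiting that the difference of the two full evaluations vanishes:
$$
0 = f(x_1,s_1) - f(x_2,s_2) = \big[f(x_1,s_1) - f(x_1,s_2)\big] + \big[f(x_1,s_2) - f(x_2,s_2)\big].
$$
The first bracket is controlled directly by the hypothesis of the lemma, giving $|f(x_1,s_1) - f(x_1,s_2)| \leq C\,(d(s_1,s_2)+D)$. For the second bracket I would use that the problem is one-dimensional: the map $f(\cdot, s_2)$ is differentiable with $f_x > \delta > 0$, so applying the mean value theorem on the segment joining $x_1$ and $x_2$ yields $f(x_1,s_2) - f(x_2,s_2) = f_x(\xi, s_2)\,(x_1 - x_2)$ for some intermediate point $\xi$, and the lower bound on the derivative then gives $|f(x_1,s_2) - f(x_2,s_2)| \geq \delta\,|x_1 - x_2|$.

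Combining the two estimates with the displayed identity produces $\delta\,|x_1 - x_2| \leq C\,(d(s_1,s_2)+D)$, which is precisely the asserted bound after dividing by $\delta$. I do not expect a serious obstacle here: the statement is essentially a quantitative version of the inverse function theorem, and the only point requiring care is the well-definedness of the inverse maps $f_s^{-1}$. This is exactly where the one-dimensional setting and the sign condition $f_x > 0$ enter, since together they force each $x \mapsto f(x,s)$ to be strictly increasing and hence globally invertible onto $(a,b)$, as already emphasised in the remark preceding the lemma.
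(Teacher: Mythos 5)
Your proof is correct and is essentially the paper's own argument in mirror image: the paper inserts the mixed term $f(x_2,s_1)$ and invokes the bound $\|(f_{s_1}^{-1})'\|_\infty \le 1/\delta$ from the inverse function theorem, whereas you insert $f(x_1,s_2)$ and apply the mean value theorem to $f(\cdot,s_2)$ directly, but both reduce to the identity $f(x_1,s_1)=y=f(x_2,s_2)$ combined with the parameter-Lipschitz hypothesis at one point and the lower bound $f_x>\delta$. No gap; the two proofs are interchangeable.
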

\begin{proof}
Note that standard inverse function theorem implies that for all $s \in S$ we have $\|(f^{-1}_s)'\|_{\infty} \leq \frac{1}{\delta}$. Hence, we can estimate:
\begin{align*}
|f_{s_1}^{-1}(y) - f_{s_2}^{-1}(y)| &\leq \left|f^{-1}_{s_1}(f(f_{s_1}^{-1}(y) ,s_1)) - f^{-1}_{s_1}(f(f_{s_2}^{-1}(y) ,s_1)) \right| \\ &\leq \frac{1}{\delta} \left| f(f_{s_1}^{-1}(y) ,s_1) - f(f_{s_2}^{-1}(y) ,s_1) \right| \\
&= \frac{1}{\delta} \left| f(f_{s_1}^{-1}(y) ,s_1) - f(f_{s_2}^{-1}(y) ,s_2) + f(f_{s_2}^{-1}(y) ,s_2)  - f(f_{s_2}^{-1}(y),s_1) \right| \\
&= \frac{1}{\delta} \left|f(f_{s_2}^{-1}(y) ,s_2)  - f(f_{s_2}^{-1}(y),s_1) \right| \leq \frac{C}{\delta}\, \left(d(s_1,s_2) + D \right).
\end{align*}
\end{proof}


\section{Theory of measure solutions and convergence of particle method}\label{ssect:measure_theory}

\noindent Using the theory of measure approach for biological problems is becoming increasingly popular as it is very intuitive and convenient for both analytical and practical numerical simulation viewpoints. In general, measures assign real values to all measurable subsets of the considered space – we say that those real values are measures of sets. Naturally, non-negative measures are useful for describing various observable quantities, such as age, distribution, or density. Importantly, the spaces of measures are vector spaces, which means in particular that the difference of two measures is again a measure from the same space.
The numerical approximation we use to simulate the solutions to the model \eqref{non-local_proliferation_radial} is based on distributions not necessarily having densities with respect to the Lebesgue measure. 
Therefore, we reformulate our model for a generalised class of solutions in the space of non-negative Radon measures, cf. \cite{carrillo2012,MR2997595,MR2644146,MR2746205,MR3342408,MR3507552,MR3461738}. Within the new approach $\mu_t(A)$ is a measure such that, for every measurable set $A\in \R^+$, $\mu_t(A)$ is the mass of cells being at time $t$ at a distance from the centre of the coordinate system belonging to the set $A$, i.e.
\begin{equation}
 \mu_t(A)=\int_{A}p\left(r,t\right)dr.
\end{equation}
Using the ETB method we approximate the solution to \eqref{non-local_proliferation_radial} assuming that each cohort represented by a Dirac mass corresponds to a mass of cells belonging to the set $A$, see \eqref{dirac_sum}. 

We denote by $\mathcal{M}(\R^+)$ the space of all bounded and signed Radon measures on $\R^+$ whereas $\mathcal{M}^+(\R^+)$ stands for its subset, consisting of non-negative measures. Let's note that for $\mu \in \mathcal{M}(\R^+)$ we have unique Hahn-Jordan decomposition $\mu = \mu^+ - \mu^-$ where $\mu^+, \mu^- \in \mathcal{M}(\R^+)$. To work in the spaces of measures one needs the notion of a norm. The total variation norm is defined by
\begin{align}\label{tv_norm}
\| \mu \|_{TV} = \mu^+(\R^+) - \mu^{-}(\R^+),
\end{align}
which can be thought of as the total mass of $\mu$. Moreover, we denote with $\| \cdot \|_{BL^*}$ the flat norm
\begin{align}\label{defeq:flatnorm}
		\|\mu\|_{BL^*}:= \sup\left\{\int_{\R^+} \! \psi  \,\mathrm{d}\mu : \psi \in BL(\R^+), \|\psi\|_{BL} \leq 1\right\},
\end{align} 
where space of bounded Lipschitz functions $BL(\R^+)$ is given by
\begin{align*}
	BL(\R^+)=\left\{f:\R^+ \to \R \mbox{ is continuous and } \|f\|_{\infty}<\infty, |f|_{Lip}<\infty\right\},
\end{align*}
and the relevant norms are defined as 
 \begin{equation*}
 \|f\|_{\infty}=\underset{x\in \R^+}{\sup}\,|f(x)|, \qquad \qquad |f|_{Lip}=\underset { x\neq y}{\sup}\, \frac {|f(x)-f(y)|}{d(x,y)}, \qquad \qquad \|f\|_{BL} = \max\left(\|f\|_{\infty}, \, |f|_{Lip}\right).
\end{equation*}
For stability properties of radial solutions, it is also important to introduce weighted flat norm given with
\begin{equation}\label{flat_wazona}
\left\|\frac{\mu}{f(r)}\right\|_{BL^*}:= \sup\left\{\int_{\R^+} \! \frac{\psi(r)}{f(r)}  \,\mathrm{d}\mu : \psi \in BL(\R^+), \|\psi\|_{BL} \leq 1\right\},
\end{equation}
where $f(r)$ is a non-negative function. We refer to \cite[Chapter 1]{our_book_ACPJ} for all properties of metric space $(\mathcal{M}^+(\R^+), \| \cdot \|_{BL^*})$.\\

\noindent As we already mentioned, to prove the convergence of the employed numerical algorithm we embed the problem into the space of non-negative Radon measures. Since the concept is quite technical we refer the interested reader to our companion paper \cite[Definition 4.1]{gwiazda2021} to get the rigorous definition of a mild measure solution to \eqref{non-local_proliferation_radial} however, one may envisage a generalised solution in the sense of distributions. Below we present the theorem concerning the existence and uniqueness of measure solutions of \eqref{non-local_proliferation_radial}.
\begin{theorem}\label{thm:well-posednesS}
Let $\mu_0 \in \mathcal{M}^+(\R^+)$ be such that $\|\mu_0\|_{BL^*}$ and $\left\|\frac{\mu_0}{r}\right\|_{BL^*} $ are finite. Then, there exists a unique measure solution $\mu_t$ to \eqref{non-local_proliferation_radial} such that 
$$\sup_{t \in [0,T]} \|\mu_t\|_{BL^*} \leq \|\mu_0\|_{BL^*} e^{C\,T}$$
$$\sup_{t \in [0,T]} \left\|\frac{\mu_t}{r}\right\|_{BL^*} \leq  \left\|\frac{\mu_0}{r}\right\|_{BL^*}\, e^{C\,T}$$
where $C$ is a constant depending continuously on parameters $0 < \alpha, \sigma_k, \sigma_i$. Moreover, we have the following decay estimate: if $\mu_0$ satisfy $\int_{\R^+} e^x \diff \mu_0(x) < \infty$ then
\begin{equation}\label{eq:propagation_of_moments}
\int_{\R^+} e^x \diff \mu_t(x) \leq e^{C\,t} \int_{\R^+} e^x \diff \mu_0(x).
\end{equation}
In particular,
\begin{equation}\label{eq:tail_estimate}
\sup_{t\in[0,T]}\int_{(R_0,\infty)} \diff \mu_t \leq e^{C\,T} \, e^{-R_0}.
\end{equation}
\end{theorem}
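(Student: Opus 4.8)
The plan is to establish existence and uniqueness by a Banach fixed-point argument in the space of measure-valued curves $C([0,T];(\mathcal{M}^+(\R^+),\|\cdot\|_{BL^*}))$, and then to extract the three quantitative bounds by testing the weak formulation of \eqref{non-local_proliferation_radial} against suitable weights and closing with Gronwall's inequality. The structural fact that drives every estimate is that the interaction kernel \eqref{eq:kernel_L_formula} obeys $L(R,r)\leq \frac{C}{R\,r}$ with support confined to $\{|R-r|<\sigma_k\}$; this is precisely what reconciles the \emph{unbounded} carrying capacity $4\pi R^2$ with the finiteness of all the quantities in the statement.

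For the fixed point I first freeze the nonlocal growth rate: given a curve $\nu$, set $g_\nu(R,t)=\int_0^\infty L(R,r)\,\diff\nu_t(r)$, which turns \eqref{non-local_proliferation_radial} into a family of linear scalar ODEs in $t$, one for each $R$. Integrating each with the integrating factor $e^{-\int_0^t g_\nu(R,s)\,\diff s}$ yields an explicit Duhamel representation $\mu_t=\Phi(\nu)_t$, expressing the solution as the decayed initial datum plus an absolutely continuous source proportional to $4\pi R^2\bigl(1-e^{-\int_0^t g_\nu(R,s)\,\diff s}\bigr)$; because $g_\nu(R,\cdot)$ vanishes once $R$ is more than $\sigma_k$ from the support of $\nu$, this source has finite mass and $\Phi(\nu)_t\in\mathcal{M}^+(\R^+)$. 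Since $r\mapsto L(R,r)$ is bounded and Lipschitz for each fixed $R$, duality in the flat norm gives $|g_\nu(R,t)-g_{\nu'}(R,t)|\leq \|L(R,\cdot)\|_{BL}\,\|\nu_t-\nu'_t\|_{BL^*}$; propagating this through the Duhamel formula (the $\frac1R$ weight in $\|L(R,\cdot)\|_{BL}$ being absorbed against $\diff\mu_0$ and against $4\pi R^2\,\diff R$) shows $\|\Phi(\nu)_t-\Phi(\nu')_t\|_{BL^*}\leq C\!\int_0^t\|\nu_s-\nu'_s\|_{BL^*}\,\diff s$, hence a contraction for $T$ small. Global existence follows by iterating on successive short intervals, the a priori bounds below excluding blow-up.

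The quantitative estimates come from the weak formulation, which (writing $g_\mu$ for the frozen rate of the solution itself) reads
\[
\frac{\diff}{\diff t}\int_{\R^+}\!\psi\,\diff\mu_t=\int_{\R^+}\!\!\int_{\R^+}\psi(R)\,4\pi R^2 L(R,r)\,\diff R\,\diff\mu_t(r)-\int_{\R^+}\!\psi(R)\,g_\mu(R,t)\,\diff\mu_t(R),
\]
where the second (damping) term is non-positive for $\psi\geq 0$. Taking $\psi\equiv 1$ and using $\int_{\R^+}4\pi R^2 L(R,r)\,\diff R\leq C$ — which holds because the $\frac1{Rr}$ decay cancels one power of $R\approx r$ over the support $|R-r|<\sigma_k$ — gives $\frac{\diff}{\diff t}\|\mu_t\|_{TV}\leq C\|\mu_t\|_{TV}$, and since $\|\cdot\|_{BL^*}=\|\cdot\|_{TV}$ on positive measures (take $\psi\equiv1$ in \eqref{defeq:flatnorm}) this is exactly the first asserted bound. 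Replacing the weight by $e^R$ and noting that $\int_{\R^+}e^R\,4\pi R^2 L(R,r)\,\diff R\leq C\,e^r$ (because $R\leq r+\sigma_k$ on the support) closes the exponential-moment bound \eqref{eq:propagation_of_moments}; the tail estimate \eqref{eq:tail_estimate} is then immediate from Markov's inequality $\int_{(R_0,\infty)}\diff\mu_t\leq e^{-R_0}\int_{\R^+}e^R\,\diff\mu_t$. The weighted bound on $\|\mu_t/r\|_{BL^*}$ is obtained analogously by testing against $\psi(r)/r$.

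The main obstacle I anticipate is the singularity of $L$ at the origin, where $L(R,r)\sim\frac{1}{Rr}$: the rate $g_\mu(R,t)$ is finite only when $\int_{\R^+}\frac1r\,\diff\mu(r)<\infty$, which is exactly why the hypothesis demands $\|\mu_0/r\|_{BL^*}$ finite and why this weighted norm must be \emph{propagated alongside} the flat norm rather than derived afterwards. Making the Duhamel/fixed-point argument rigorous therefore requires carrying the pair $(\|\mu_t\|_{BL^*},\|\mu_t/r\|_{BL^*})$ through the iteration simultaneously and verifying that both the source $4\pi R^2 g_\nu\,\diff R$ and the differences $4\pi R^2(g_\nu-g_{\nu'})\,\diff R$ have controlled weighted flat norm, for which one checks that $r\mapsto\int_{\R^+}\psi(R)\,4\pi R^2 L(R,r)\,\diff R$ is itself bounded and Lipschitz. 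The delicate verification near $r=0$ is the genuinely technical point, and it is carried out in detail in the companion paper \cite{gwiazda2021}.
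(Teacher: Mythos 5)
You should be aware that this paper does not actually prove Theorem \ref{thm:well-posednesS}: it is recalled in Appendix B and the proof is explicitly deferred to the companion paper \cite{gwiazda2021} (see the first line of the proof of Lemma \ref{lem:control_from_below}). So there is no in-paper argument to compare against; I can only assess your outline on its own terms. As an outline it is sound and is almost certainly the intended route: freezing the nonlocal rate $g_\nu$, the Duhamel representation $\mu_t = \mu_0\,e^{-\int_0^t g_\nu\,\diff s} + 4\pi R^2\bigl(1-e^{-\int_0^t g_\nu\,\diff s}\bigr)\diff R$ is exact (since $\int_0^t g(s)e^{-\int_s^t g}\diff s = 1-e^{-\int_0^t g}$), the contraction in $C([0,T];\|\cdot\|_{BL^*})$ plus Gronwall is the standard machinery for measure-valued structured population models, and the identification $\|\cdot\|_{BL^*}=\|\cdot\|_{TV}$ on $\mathcal{M}^+$ together with the Chebyshev argument with weight $e^R$ correctly yields \eqref{eq:propagation_of_moments} and \eqref{eq:tail_estimate}. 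One point of your justification is imprecise: the uniform bound $\int_{\R^+}4\pi R^2L(R,r)\,\diff R\leq C$ does \emph{not} follow near $r=0$ from ``the $\frac{1}{Rr}$ decay cancels one power of $R\approx r$'' (that computation gives $C\sigma_k(1+\sigma_k/r)$, which blows up); what saves it there is that the numerator of \eqref{eq:kernel_L_formula} is itself bounded by $4Rr$, so $L$ is in fact globally bounded, and one uses $L\leq C$ for small $r$ and $L\leq C/(Rr)$ for large $r$. Finally, your sketch openly defers the genuinely technical step — propagating the pair $\bigl(\|\mu_t\|_{BL^*},\|\mu_t/r\|_{BL^*}\bigr)$ through the fixed-point iteration near $r=0$ — to \cite{gwiazda2021}; that is not a self-contained proof, but it is no less complete than what the present paper provides.
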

\begin{remark}
When $\mu_0$ is a radial measure, i.e. it is scaled with $r^2$, and compactly supported, all assumptions of Theorem \ref{thm:well-posednesS} are satisfied.
\end{remark}

\begin{remark}\label{uwaga_o_ogonie}
Tail estimate in \eqref{eq:tail_estimate} allows assuming that the support of $\mu_t$ is finite.
\end{remark}

\begin{remark}
It is classical to apply flat norm in problems related to the convergence of particle method cf. \cite{MR3267354,MR3986559,MR3461738,MR3244779,MR2644146}. However, our kernel $L(R,r)$ defined by \eqref{eq:kernel_L_formula} is singular at $R=0$ or $r=0$ and therefore it does not satisfy the assumptions of the previous works. Despite this difficulty in our theoretical paper, we were able to prove convergence in the weighted flat norm \cite{gwiazda2021}.
\end{remark}

\noindent We also recall the result on the convergence of the particle method and certain properties of particle approximation, see \cite[Theorem 1.1 and 5.4]{gwiazda2021}.

\begin{theorem}\label{thm:conv_particle_method}
Let $\mu_0$ be as in Theorem \ref{thm:well-posednesS} and assume additionally that $\left\|\frac{\mu_0(r)}{r^2}\right\|_{BL^*} < \infty$. Consider its approximation $\mu_0^N = \sum_{i=1}^N m_i^N(0)\, \delta_{x_i}$ as defined in \eqref{eq:approx_init_cond}. Let $\mu_t^N = \sum_{i=1}^N m_i^N(t)\, \delta_{x_i}$ to the numerical scheme \eqref{eq:num_scheme}. Then,
\begin{itemize}
\item[(A)] we have estimates
$$
\sup_{t \in [0,T]} \|\mu_t^N\|_{BL^*} \leq \|\mu_0\|_{BL^*} e^{C\,T}, \qquad \qquad
\sup_{t \in [0,T]} \left\|\frac{\mu_t^N}{r}\right\|_{BL^*} \leq  \left\|\frac{\mu_0}{r}\right\|_{BL^*}\, e^{C\,T}
$$
where $C$ is a constant depending continuously on parameters $0 < \alpha, \sigma_k, \sigma_i$.
\item[(B)] $\mu_t^N$ satisfies the same decay bounds \eqref{eq:propagation_of_moments}--\eqref{eq:tail_estimate} as $\mu_t$.
\item[(C)] if $\mu_t$ is a measure solution to \eqref{non-local_proliferation_radial} with initial condition $\mu_0$ then for all $R_0 > 1$:
\begin{equation}\label{eq:estimate_convergence_particle}
\left\| \frac{\mu_t^N - {{\mu_t}}}{r} \right\|_{BL^*} \leq C\,\left[ \frac{R_0^2}{N} + e^{-R_0}\right].
\end{equation}
\end{itemize}
\end{theorem}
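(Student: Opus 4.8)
The plan is to prove (A) and (B) as Gr\"onwall-type a priori estimates applied directly to the finite system \eqref{eq:num_scheme}, and to obtain the convergence estimate (C) from a consistency-and-stability argument carried out in the weighted flat norm \eqref{flat_wazona}, paralleling the continuous bounds of Theorem \ref{thm:well-posednesS}. The single structural fact used throughout is that the singularity of the kernel \eqref{eq:kernel_L_formula} is exactly of order $1/(Rr)$ at the origin: since $\min\{(R+r)^2,\sigma_k^2\}-\min\{(R-r)^2,\sigma_k^2\}\le 4Rr$, the map $r\mapsto r\,L(R,r)$ is bounded and Lipschitz \emph{uniformly in} $R$, with constants depending continuously on $\alpha,\sigma_k$. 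This is precisely what lets the weight $1/r$ absorb the singular behaviour of $L$.

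For (A) I would test \eqref{eq:num_scheme} against a function $\psi$ with $\|\psi\|_{BL}\le1$ and then against $\psi/r$, using $\sum_j L(x_i,x_j)\,m_j(t)=\int_0^\infty L(x_i,r)\,\diff\mu_t^N(r)\le \|r\,L(x_i,\cdot)\|_{\infty}\,\|\mu_t^N/r\|_{BL^*}$ to close a differential inequality of the form $\frac{d}{dt}\|\mu_t^N\|_{BL^*}\le C\,\|\mu_t^N\|_{BL^*}$, and likewise for the weighted norm. The source factor $4\pi x_i^2$ is kept under control by the decay bound (B): because the masses decay exponentially in $x_i$, the $R^2$-weight is integrable against the profile and the resulting constant stays independent of $R_0$. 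Gr\"onwall then yields both estimates, and the bounds transfer from $\mu_0$ because for non-negative measures $\|\mu_0^N\|_{BL^*}=\|\mu_0^N\|_{TV}\le\|\mu_0\|_{TV}=\|\mu_0\|_{BL^*}$, while $\|\mu_0^N/r\|_{BL^*}\le\|\mu_0/r\|_{BL^*}$ follows from \eqref{eq:approx_init_cond} since $\tfrac{1}{x_i}\le\tfrac1r$ on each cell. Part (B) follows the same template with the test function $e^{R}$, truncated and then passed to the limit: one gets $\frac{d}{dt}\int e^{R}\,\diff\mu_t^N\le C\int e^{R}\,\diff\mu_t^N$, hence \eqref{eq:propagation_of_moments} by Gr\"onwall, and the tail bound \eqref{eq:tail_estimate} is the one-line Chebyshev estimate $\int_{(R_0,\infty)}\diff\mu_t^N\le e^{-R_0}\int e^{R}\,\diff\mu_t^N$. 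Since \eqref{eq:num_scheme} shares the algebraic structure of \eqref{non-local_proliferation_radial}, these are the identical computations behind Theorem \ref{thm:well-posednesS}.

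The crux is (C), which I expect to be the main obstacle. Here I would decompose $\|(\mu_t^N-\mu_t)/r\|_{BL^*}$ into a consistency part and a stability part. The consistency part measures how far $\mu_t^N$ is from being an exact solution of \eqref{non-local_proliferation_radial}, and has two sources: replacing the non-local integral $\int_0^\infty L(R,r)\,\diff\mu_t(r)$ by the grid sum $\sum_j L(x_i,x_j)\,m_j$ on the nodes $x_i=\frac{i}{N}R_0$, and truncating the tail beyond $R_0$. Because $r\mapsto r\,L(R,r)$ is Lipschitz, a Riemann-rule error estimate on $[0,R_0]$ with spacing $R_0/N$ contributes a term of order (spacing)$\times$(length)$=R_0^2/N$, while the truncation is controlled by \eqref{eq:tail_estimate} and contributes $e^{-R_0}$; these are exactly the two terms in \eqref{eq:estimate_convergence_particle}. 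The stability part is the genuinely hard step: it requires a Lipschitz-in-measure bound for the right-hand side of \eqref{non-local_proliferation_radial} in the weighted flat norm, which is where the singular kernel blocks a direct appeal to the classical particle-method references. The key manoeuvre is to rewrite, for admissible $\psi$, the singular integral as $\int L(R,r)\,\diff(\mu_t-\nu_t)(r)=\int \frac{r\,L(R,r)}{r}\,\diff(\mu_t-\nu_t)(r)$, which by \eqref{flat_wazona} is bounded by $\|r\,L(R,\cdot)\|_{BL}\,\|(\mu_t-\nu_t)/r\|_{BL^*}\le C\,\|(\mu_t-\nu_t)/r\|_{BL^*}$ precisely because $r\,L(R,r)$ is bounded Lipschitz; combined with the uniform mass control from (A) and (B) this yields the desired contraction-type estimate. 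Closing a Gr\"onwall inequality in the weighted flat norm with the consistency error as forcing then gives \eqref{eq:estimate_convergence_particle}. As the remaining manipulations are lengthy and measure-theoretic, I would record only this structure here and defer the full argument to the companion paper \cite{gwiazda2021}.
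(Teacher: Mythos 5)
The first thing to note is that the paper contains no proof of this theorem at all: it is imported from the companion paper (see the citation to \cite{gwiazda2021}, Theorems 1.1 and 5.4, and the sentence preceding the statement), so your sketch can only be judged on its own merits. Your overall architecture --- Gr\"onwall arguments for (A) and (B), a consistency-plus-stability decomposition in the weighted flat norm for (C) --- is the standard one and is almost certainly the skeleton of the companion paper's argument; parts (A) and (B) are essentially sound as sketched. But the ``single structural fact'' on which your crux step (C) rests is false. From $N(R,r):=\min\{(R+r)^2,\sigma_k^2\}-\min\{(R-r)^2,\sigma_k^2\}\le 4Rr$ you correctly conclude that $L$, and indeed $r\,L(R,r)$, is uniformly \emph{bounded}; however, $r\mapsto r\,L(R,r)$ is \emph{not} Lipschitz uniformly in $R$. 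Take $R$ small and $r\in(\sigma_k-R,\,\sigma_k+R)$: there $N(R,r)=\sigma_k^2-(r-R)^2$, so
$$
r\,L(R,r)\;=\;\frac{3\alpha}{16\pi\sigma_k^3}\,\frac{N(R,r)}{R}
$$
falls from roughly $\frac{3\alpha}{4\pi\sigma_k^2}$ at $r=\sigma_k-R$ to $0$ at $r=\sigma_k+R$, i.e.\ across an interval of width $2R$, so its Lipschitz constant in $r$ blows up like $1/R$ as $R\to0$. Consequently your key pairing $\int_0^\infty L(R,r)\,\diff(\mu_t-\nu_t)(r)\le \|r\,L(R,\cdot)\|_{BL}\,\|(\mu_t-\nu_t)/r\|_{BL^*}$ is not uniform in $R$, and the Gr\"onwall loop for the stability part does not close in $\|\cdot/r\|_{BL^*}$ alone. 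This is precisely the obstruction the paper itself flags in the remark preceding the theorem in Appendix \ref{ssect:measure_theory}: the kernel ``is singular at $R=0$ or $r=0$ and therefore does not satisfy the assumptions of the previous works'' --- your manoeuvre is exactly the classical one that the singularity blocks.

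The telltale symptom is that your sketch never uses the additional hypothesis $\left\|\mu_0/r^2\right\|_{BL^*}<\infty$, which the theorem assumes precisely because the single weight $1/r$ cannot absorb the singular behaviour; that hypothesis encodes the quadratic vanishing of radial measures at the origin (recall $p(r,t)=4\pi r^2\,n$), and the region $R\approx 0$ must be handled using this stronger weight (e.g.\ by splitting off a neighbourhood of the origin, where the $r^{-2}$-weighted bound makes the mass negligible against the $1/R$ blow-up, and running your pairing only away from it). Without such a two-weight mechanism the estimate \eqref{eq:estimate_convergence_particle} is not reached. A minor further nit: $\|\mu_0^N/r\|_{BL^*}\le\|\mu_0/r\|_{BL^*}$ does not follow from $1/x_i\le 1/r$ on each cell once the test functions change sign; one only gets it up to a grid error of the same $R_0^2/N$ order, which is harmless but should be stated. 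With the singularity issue repaired along the lines above, your outline would match the cited proof in structure; as written, the stability step fails at $R=0$.
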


We remark that in general one needs parameter $R_0$ to denote truncation of the support of solutions: in general, the solution is supported on the whole half-line even if initial data is compactly supported.


\section{Simulated Random walk Metropolis–Hastings algorithm}\label{algorytm}
\newcommand{\forconda}{$i=0$ \KwTo $n$}
\newcommand{\forcondb}{$j=1$ \KwTo $M$}
\SetKwInput{KwDesignation}{Designation}
\SetKwInput{KwNotation}{Notation}
\SetKwInput{KwInitialisation}{Initialisation}

\begin{algorithm}[H]
\SetAlgoLined
\KwResult{Sample $\{\theta_j\}_{j=0,..,\textrm{M}}$ from approximated posterior distribution}
\KwInitialisation{$\theta_0 := [\log(\alpha^0), \log(\sigma_k^0), \log(\sigma_{o}^0), \log(\sigma_{i})$];}
\For{\forconda}{
Compute $m_0(t_i)$\;
}
\For{\forcondb}{
$\tilde{\theta} \sim N(\theta_{j},s \! \cdot \! \textrm{Id})$;\\

\For{\forconda}{
Compute $\tilde{m}(t_i)$ 
}
\uIf{$\mathcal{U}[0,1] \leq \min\Big\{1, \frac{\pi(\tilde{\theta})\ell(D|\tilde{\theta})}{\pi(\theta_j)\ell(D|\theta_j)} \Big\}$}{$\theta_j = \tilde{\theta}$}
\Else{$\theta_j = \theta_{j-1}$}
}
\caption{Random walk Metropolis--Hastings algorithm}
\end{algorithm}

\end{document}